\DeclareMathOperator{\Tr}{Tr}
\DeclareMathOperator{\End}{End}
\DeclareMathOperator{\F}{\mathcal{F}}
\DeclareMathOperator{\C}{\mathcal{C}}
\DeclareMathOperator{\rank}{\text{Rank}}
\DeclareMathOperator{\I}{\mathcal{I}}
\DeclareMathOperator{\supp}{supp}
\newtheorem{theorem}{Theorem}[section]
\newtheorem{corollary}[theorem]{Corollary}
\newtheorem{lemma}[theorem]{Lemma}
\newtheorem{proposition}[theorem]{Proposition}
\theoremstyle{definition}
\newtheorem*{theorem*}{Theorem}
\numberwithin{equation}{section}
 \definecolor{primary}{RGB}{31, 78, 121}    
 \definecolor{secondary}{RGB}{110, 110, 110}
 \newtcolorbox{infobox}{
 	colback=primary!5, 
 	colframe=primary, 
 	boxrule=0.5pt, 
 	arc=4pt,
 	title=\textbf{Note:}
 }
\begin{document}
\title[Smoothness of the Radon-Nikodym derivative]{Convolution of $\chi$-orbital Measures on Complex Grassmannians}
\author{Mahmoud Al-Hashami and Boudjem\^aa Anchouche}
\address{University of Technology and Applied Sciences, Nizwa, \ Oman.\newline
Department of Mathematics, Kuwait University}
\email{mahmoud.alhashami@utas.edu.om\\
boudjemaa.anchouche@ku.edu.kw}
\date{
}
\subjclass[2000]{Primary 43A77, 43A90; Secondary 53C35, 28C10}
\keywords{$\chi$-Orbital measures, Radon-Nikodym derivative, Complex Grassmannians}

\begin{abstract}
Let  {\scriptsize $SU(p+q)/S\left(U(p)\times U(q) \right)$} be the Grassmannian of complex $p$-dimensional subspaces of $\mathbb{C}^{p+q}$, where $p$ and $q$  are integers such that $p\geq q \geq 2$. 
The aim of this paper is to extend the main result in \cite{anchouche2}, \cite{Alhashami} to the case of convolution of $\chi$-orbital measures  where $\chi$ is a character of $S\left(U(p)\times U(q) \right)$. More precisely, we give sufficient conditions for the $\C^{\nu}$-smoothness of the Radon Nikodym derivative $$f_{ a_{1},...,a_{r}, \chi}=d\left(\mu_{a_1, \chi}\ast...\ast\mu_{a_r, \chi}\right)  /d\mu_{{SU(p+q)}}$$ of the convolution
$\mu_{a_1, \chi}\ast...\ast\mu_{a_r, \chi}$  with respect to the Haar measure $\mu_{\mathsf{SU(p+q)}}$ of $SU(p+q)$, where $\mu_{a_j, \chi}$ are some orbital measures defined below.
\end{abstract}
\maketitle
\tableofcontents

\section{Introduction} \label{intro}

Let $M=U/K$ be a compact symmetric space and let 
$a_1, a_2, \cdots, \, a_r $ be elements in $U\backslash N_U(K)$, where $N_U(K)$ is the normalizer of $K$ in $U$, and consider the following linear functionals
defined by
\[ \I_{a_j}\left(f \right) =\iint_{K\times K} f\left(k_1a_jk_2 \right) d\mu_K \left(k_1\right)d\mu_K\left( k_2\right),  \;\;\;\;\;\;\; f\in \C(U)    , \,j=1,...,r,\]
where $\C\left(U\right)$ is the  space of continuous functions on $U$, and $\mu_K$ is the Haar measure of the Lie group $K$ normalized by ${ \mu_K\left(K \right)} =1$.  By Riesz representation theorem, to each of the functionals $\I_{a_j}$, $j=1,...,r$, corresponds a singular measure $\mu_{a_j}$ in $U$  supported on $Ka_jK$. Several authors considered the problem of the regularity of the convolution of the orbital measures $\mu_{a_j}$. In \cite{RaZonal}, Ragozin proved that the convolution of $r$ orbital measures is absolutely continuous with respect to the normalized  Haar measure of $U$ if $r \geq \dim M$.
 The absolute continuity of the convolutions of the  measures \(\mu_{a_j}\)  with respect to the Haar measure \(\mu_U\) of \(U\),  has also been discussed in \cite{AG}.
  The \(L^2\) regularity of the Radon-Nikodym derivative of the convolution of the measure  \(\mu_{a_j}\), where $M=SU(2)/SO(2)$,  with respect to the Haar measure of $SU(2)$, has been studied in \cite{AGP}.  
The smoothness of the Radon-Nikodym derivative of the convolution of the measures \(\mu_{a_j}\) on some symmetric compact spaces   $U/K$  with respect to the normalized Haar measure of \(U\),   was considered in \cite{Alhashami} and \cite{anchouche2}.  
  The convolution of orbital measures on arbitrary non-compact symmetric spaces $G/K$ was studied in \cite{anchouche-noncompact case}. Specifically, the paper examines the  $L^2$-regularity (resp. $C^\nu$-smoothness) of the Radon-Nikodym derivative of the convolution $\mu_{a_1}\ast\cdots \ast\mu_{a_r}$ with respect to a fixed left Haar measure $\mu_G$ on $G$.

It is known that if a result is true for Grassmannians, it is likely to be true for all compact symmetric spaces. Following this guideline, we considered in \cite{Alhashami} the question of smoothness of the Radon-Nikodym derivative of the convolution of the orbital measures $\mu_{a_j}$ on complex Grassmannians.  In this paper, we consider the case of the complex Grassmannian $M=U/K$, where $U=SU\left(p+q \right) $ and $K=S\left(U(p)\times U(q) \right) $, with $p$ and 
$q$ being integers such that $p\geq q\geq 2$, and we introduce the following twisted complex valued functionals: 
$$
\I_{a_j,\chi}\left(f \right)=\int_K\int_K f\left(k_1a_jk_2 \right)\chi(k_1k_2)^{-1}  d\mu_K(k_1) d\mu_K(k_2), \, \, \,  f \in \C \left(U \right) , \,j=1,...,r,
$$
where $\C\left(U\right)$ is the  space of continuous functions on $U$,  
and $$
\chi: K \longrightarrow \mathbb{C}
$$
is a character. Again, by Riesz representation theorem, to the functional $\I_{a_j,\chi}$ corresponds a $\mathbb{C}$-valued regular Borel measure $\mu_{a_j, \chi}$, $j=1,2,...,r$. 
Since \\$\supp(\mu_{a_j,\chi}) = Ka_jK$ for $j=1,\dots,r$ 
and $Ka_jK$ has empty interior in $U$, 
the measures $\mu_{a_j,\chi}$ are singular. 

In the first part of this paper, we give a sufficient condition for $\mu_{a_1, \chi}\ast \cdots \ast \mu_{a_r, \chi}$ to be absolutely continuous with respect to the normalized Haar measure $\mu_U$ of $U$, obtained in section \ref{Absolute Continuity}. 
In the second part, for each given  positive integer $\nu$, a lower bound 
$C\left(p,q,\nu \right)$,  as defined below, is provided such that for every 
$r\geq C(p,q,\nu)$, the Radon-Nikodym derivative   
\[
\frac{d\big(\mu_{a_1, \chi} \ast \cdots \ast \mu_{a_r, \chi}\big)}{d\mu_U}
\]
is in $\mathcal{C}^{\nu}(U)$, the space of $\nu$-times continuously differentiable functions on $U$, where $\mu_U$ is the Haar measure on $U$, normalized by $\mu_U(U)=1$. More precisely, let $\nu$ be a positive integer and \begin{equation*}  
C\left(p,q,\nu \right):=\max\left(\left\lfloor\frac{(p+q)^2+2\nu+q(2p-1) }{2p-q}\right\rfloor+1, 2pq\right). 
\end{equation*}
The aim of this paper is to prove the following

\begin{theorem*}[Main Theorem]\label{main} 
	Let $\nu$ be a positive integer. If 
	\[
	r\geq C\left(p,q,\nu \right),
	\]
	then 
	$$
	\frac{d\big(\mu_{a_1, \chi}\ast \cdots \ast \mu_{a_r, \chi}\big)}{d\mu_U} \in \C^{\nu}\left(SU(p+q)\right).
	$$
\end{theorem*}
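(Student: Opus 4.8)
The plan is to pass to the Peter--Weyl expansion of the density on $U=SU(p+q)$ and to convert the smoothness statement into a summability estimate for Fourier coefficients. Write $\pi_\lambda$ for the irreducible representation of highest weight $\lambda$, $d_\lambda=\dim\pi_\lambda$, and let $P^\lambda_\chi$ denote the orthogonal projection onto the subspace of $\pi_\lambda|_K$ on which $K$ acts by the character $\chi$. Because the integrand defining $\I_{a_j,\chi}$ carries the factor $\chi(k_1k_2)^{-1}$, the density $f=d(\mu_{a_1,\chi}\ast\cdots\ast\mu_{a_r,\chi})/d\mu_U$ transforms under left and right translation by $K$ through $\chi$, so its operator Fourier coefficient is sandwiched between $\chi$--isotypic projections. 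One checks, as in the first part of the paper, that for this symmetric pair the $\chi$--isotypic subspace is at most one--dimensional (twisted Gelfand--pair / multiplicity--one property); hence $\widehat{\mu}_{a_j,\chi}(\lambda)=P^\lambda_\chi\,\pi_\lambda(a_j)\,P^\lambda_\chi$ has rank $\le 1$ and is governed by the single scalar $\phi^\chi_\lambda(a_j)$, the $\chi$--spherical function value. Only the $\chi$--spherical weights ($P^\lambda_\chi\neq0$) contribute, and these are indexed by a cone in a lattice of rank equal to the rank $q$ of $SU(p+q)/S(U(p)\times U(q))$.

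Next I would invoke the convolution theorem: the Fourier coefficient of the convolution is the composition of the individual coefficients, and since each $\widehat{\mu}_{a_j,\chi}(\lambda)$ is rank one on the same line, the trace and Hilbert--Schmidt norms factor, giving $\|\widehat{f}(\lambda)\|_{\mathrm{HS}}=\prod_{j=1}^r|\phi^\chi_\lambda(a_j)|$. By Plancherel on $U$ together with the Sobolev embedding $H^s(U)\hookrightarrow C^{\nu}(U)$, valid for $s>\nu+\tfrac12\dim U$, it then suffices to prove
\[
\|f\|_{H^s}^2=\sum_{\lambda\ \mathrm{sph}}\bigl(1+\|\lambda\|^2\bigr)^{s}\,d_\lambda\,\prod_{j=1}^r|\phi^\chi_\lambda(a_j)|^{2}<\infty ,
\]
the weight $(1+\|\lambda\|^2)^s$ coming from the Casimir eigenvalue $\sim\|\lambda\|^2$. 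The theorem thus becomes a contest between three powers of $\|\lambda\|$: the decay of the spherical factors against the growth of $d_\lambda$ and the count of spherical weights.

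The decisive input is the decay estimate for the twisted spherical functions established in the first part of the paper (extending \cite{anchouche1}, \cite{Alhashami}): for $a_j\in U\setminus N_U(K)$ one has a uniform bound of the form $|\phi^\chi_\lambda(a_j)|\le C\,(1+\|\lambda\|)^{-(2p-q)/2}$, the exponent being dictated by the restricted--root data of the Grassmannian. Inserting this, using the Weyl dimension formula (which yields $d_\lambda=O(\|\lambda\|^{D})$ for an explicit degree $D$ along the $\chi$--spherical cone) and the Weyl lattice--point count ($O(R^{q-1})$ weights with $\|\lambda\|\approx R$), the sum above is dominated by $\sum_R R^{\,q-1+2s+D-r(2p-q)}$, which converges as soon as $r(2p-q)>2s+D+q$. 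Letting $s$ decrease to $\nu+\tfrac12\dim U$ and substituting $\dim U=(p+q)^2-1$ together with the explicit value of $D$ makes this threshold coincide with $r(2p-q)\ge (p+q)^2+2\nu+q(2p-1)$, which is the first entry of the maximum defining $C(p,q,\nu)$. The second entry, $2pq=\dim_{\mathbb{R}}(U/K)$, is the threshold already required in the first part to guarantee that $\mu_{a_1,\chi}\ast\cdots\ast\mu_{a_r,\chi}$ is absolutely continuous, so that the density $f$ and its Peter--Weyl series exist at all; taking the maximum gives the condition $r\ge C(p,q,\nu)$.

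I expect the genuine obstacle to be the decay estimate $|\phi^\chi_\lambda(a_j)|\le C(1+\|\lambda\|)^{-(2p-q)/2}$ itself, with the sharp exponent, uniform for $a_j$ in compacta away from $N_U(K)$ and with controlled dependence on $\chi$. This rests on a Harish--Chandra--type integral representation of the spherical functions on the complex Grassmannian and on stationary--phase / oscillatory--integral estimates, in which the hypothesis $a_j\notin N_U(K)$ is precisely what excludes the degenerate critical configurations that would otherwise kill the polynomial decay. Carrying the character twist through this analysis, and verifying that it does not degrade the exponent, is the delicate point; everything else reduces to the dimension and lattice--point bookkeeping sketched above.
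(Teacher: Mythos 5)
Your overall architecture coincides with the paper's: absolute continuity for $r\ge 2pq=\dim_{\mathbb{R}}(U/K)$, the multiplicity-one/rank-one structure of the operator Fourier coefficients giving $\Vert\widehat{f}(\lambda)\Vert_{HS}=\prod_{j}|\psi_{\lambda,l}(a_j)|$, the resulting Sobolev-norm identity $\Vert f\Vert_{H^s}^2=\sum_{\lambda}d_\lambda(1+\kappa_\lambda)^s\prod_{j}|\psi_{\lambda,l}(a_j)|^2$, the dimension bound $d_\lambda\le(m_1+1)^{q(2p-1)}$, and the Sobolev embedding with $s>\nu+\tfrac12\bigl((p+q)^2-1\bigr)$. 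The gap is in the decisive input, which you leave unproven and, more importantly, state in a form too weak to give the theorem as stated. You posit the uniform bound $|\psi_{\lambda,l}(a)|\le C(1+\Vert\lambda\Vert)^{-(2p-q)/2}$ and propose to derive it from a Harish--Chandra integral representation via stationary phase. The paper does not argue this way: it uses the explicit Berezin--Karpelevich-type determinant formula for the $\chi_l$-spherical functions on the complex Grassmannian (Theorem \ref{Spherical function}, imported from the companion paper) to obtain the \emph{product} bound $|\psi_{\lambda,l}(\exp(\sqrt{-1}H_{(t_1,\dots,t_q)}))|\le C(t_1,\dots,t_q)\prod_{j=1}^{q}n_j^{-(2p-q)/2}$ (with a modified exponent when $m_q=0$), where $n_j=m_j+q-j$ are the shifted coordinates of $\lambda$. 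Decay in each coordinate separately, not merely in $\Vert\lambda\Vert$, is essential because the $\chi_l$-spherical weights form a $q$-dimensional cone and the series must converge along directions where only one $n_j$ grows.

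As a consequence, your lattice-point bookkeeping does not reproduce the stated constant: by your own count the sum $\sum_R R^{\,q-1+2s+q(2p-1)-r(2p-q)}$ converges only when $r(2p-q)>(p+q)^2+2\nu+q(2p-1)+(q-1)$, which exceeds the threshold defining $C(p,q,\nu)$ by $q-1$; the coincidence you claim holds only for $q=1$. With the product bound of Corollary \ref{estimation} the sum essentially factors over $n_1,\dots,n_q$, the binding constraint is the one-dimensional sum $\sum_{n_1}n_1^{\,2s+q(2p-1)-r(2p-q)}$, and this converges exactly under the hypothesis $r>\frac{1+2s+q(2p-1)}{2p-q}$ of Theorem \ref{cor-fund}, which yields $C(p,q,\nu)$ upon substituting $s=\nu+\frac{(p+q)^2-1}{2}+\epsilon$. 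So what is missing is precisely the coordinate-wise decay estimate and the summation that exploits its product structure; the rest of your outline matches the paper.
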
 
The paper is organized as follows. 
In section \ref{Spherical Representations}, we collect some results about $\chi$-spherical representations and $\chi$-spherical functions which will be needed in the paper. In section \ref{Absolute Continuity}, we give a sufficient condition for the absolute continuity of the convolution of the  $\chi$-orbital measures with respect to the normalized Haar measure on $U$.
In section \ref{Fourier Transform},	we study the Fourier transform of the Radon-Nikodym derivative of the convolution of $\chi$-orbital measures with respect to the Haar measure on $U$.
In section \ref{Complex Grassmannians}, we review some of the main properties of restricted roots on complex Grassmannians.
In section \ref{Proof of the Main Theorem}, we give a proof of the main result of the paper.

\section{The $\chi_l$-Spherical Representations}\label{Spherical Representations}
Let $G$ be a  non-compact, connected, simple  Lie group, and  $\mathfrak{g}$ its Lie algebra. Let $$ \mathfrak{g}= \mathfrak{k}\oplus \mathfrak{p}, $$  be  a Cartan decomposition of the Lie algebra $\mathfrak{g}$.   It is known that  $$ \mathfrak{u}= \mathfrak{k}\oplus \sqrt{-1}\mathfrak{p} $$ is a compact real form of $\mathfrak{g}_\mathbb{C}$, the complexification of $\mathfrak{g}$. Let $ \mathfrak{ a}$ be a maximal abelian  subspace of $\mathfrak{p}$ and $\mathfrak{h}$   a Cartan subalgebra of $\mathfrak{g}$ containing $\mathfrak{a}$. Then 
$ \mathfrak{h}= \mathfrak{m} \oplus \mathfrak{a}$, where $\mathfrak{m}= \mathfrak{ h} \cap \mathfrak{k}$. 
{    Let $U$ be the simply connected, simple  Lie group with Lie algebra $\mathfrak{u}$ and let $K$ be a Lie group with Lie algebra $\mathfrak{k}$.}

Since every character $\chi$ of $K$ is trivial on the commutator subgroup $\left[K,K \right]$, and since $K=Z\left(K \right)\left[K,K \right] $, $\mathfrak{k}= [\mathfrak{k},\mathfrak{k}]\oplus\mathfrak{z}(\mathfrak{k})$, where $\mathfrak{z}(\mathfrak{k})$ is   the center of $\mathfrak{k}$, 
$Z(K)$  is the center of $K$, we see that if $\mathfrak{z}(\mathfrak{k}) $ is trivial, then every character is trivial. Therefore, we will assume in what follows that $\mathfrak{z}(\mathfrak{k}) $ is not trivial, hence $K$ is not semisimple and therefore $U/K$ is Hermitian.

Let $Z$ be a non-zero  element of $\mathfrak{z}(\mathfrak{k})$  such that $\exp\left( tZ\right) \in Z\left( K\right) $ and \\ $\exp(tZ) \in \left[K,K \right]$ if and only if $t \in 2 \pi \mathbb{Z}$.  For each integer $l$, put
\[
\chi_l\left( x \right) =
\begin{cases}
1 &\text{ if \( x \in \left[K, \, K \right] \);}  \\ \\
\exp\left(itl \right)  &\text{ if \( x=\exp\left(tZ \right)  \).}
\end{cases}
\] 
It can be seen that for each integer $l$, $\chi_l$ is a character of $K$, moreover every character of $K$ is of that form, i.e., the family $\left( \chi_l\right)_{l \in \mathbb{Z}}$ exhausts all the characters of $K$, more precisely, we have the following
\begin{proposition}[\cite{Schlichtkrull}] 
	For every integer $l$, $\chi_l$ is a character of $K$. Moreover, every character of $K$ is of the form $\chi_l$ for some integer $l$.	
\end{proposition}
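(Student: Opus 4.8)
The plan is to reduce the statement to the elementary classification of characters of a one-dimensional torus. The guiding observation, already recorded above, is that any character $\chi$ of $K$ is trivial on the commutator subgroup $[K,K]$ and hence factors through the abelianization $K/[K,K]$; conversely, every character of $K/[K,K]$ pulls back to a character of $K$. Thus it suffices to determine $K/[K,K]$ together with its character group, and then to match the resulting characters with the $\chi_l$ defined above.

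First I would identify $K/[K,K]$ as a circle. Since $\mathfrak{k}=[\mathfrak{k},\mathfrak{k}]\oplus\mathfrak{z}(\mathfrak{k})$ with $\mathfrak{z}(\mathfrak{k})$ one-dimensional (the space $U/K$ being Hermitian), the Lie algebra of $K/[K,K]$ is isomorphic to $\mathfrak{z}(\mathfrak{k})$ and is one-dimensional; as $K$ is connected and compact, $K/[K,K]$ is a connected compact abelian Lie group of dimension one, i.e. a circle. To make this identification explicit I would use the one-parameter subgroup $t\mapsto\exp(tZ)$ generated by the chosen $Z\in\mathfrak{z}(\mathfrak{k})$. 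Writing $\pi:K\to K/[K,K]$ for the projection, the composite $t\mapsto\pi(\exp(tZ))$ is a continuous homomorphism $\mathbb{R}\to K/[K,K]$ whose differential at $0$ is $\pi_*Z$, a nonzero vector spanning the Lie algebra of the target; its image is therefore open and, by connectedness of $K/[K,K]$, equal to the whole group. (Alternatively, surjectivity follows from $K=Z(K)[K,K]$ together with $Z(K)^0=\exp(\mathfrak{z}(\mathfrak{k}))=\{\exp(tZ)\}$.) Its kernel is exactly $\{t:\exp(tZ)\in[K,K]\}=2\pi\mathbb{Z}$, by the defining property of $Z$, so one obtains an isomorphism $\mathbb{R}/2\pi\mathbb{Z}\cong K/[K,K]$ given by $t\mapsto\pi(\exp(tZ))$.

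With this identification the classification is immediate: the characters of $\mathbb{R}/2\pi\mathbb{Z}$ are precisely the maps $t\mapsto e^{ilt}$ with $l\in\mathbb{Z}$, by the standard Fourier theory (Pontryagin duality) for the circle. Pulling back along $\pi$, a character of $K$ is trivial on $[K,K]$ and is determined by its value on the generator $\exp(tZ)$; this is exactly the piecewise formula defining $\chi_l$. Hence each $\chi_l$ is a well-defined character, and every character of $K$ equals $\chi_l$ for a unique $l\in\mathbb{Z}$, which proves both assertions.

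The one genuinely technical step is the explicit identification in the middle paragraph, namely showing that $t\mapsto\pi(\exp(tZ))$ is surjective with kernel $2\pi\mathbb{Z}$. Surjectivity rests on the one-dimensionality of $\mathfrak{z}(\mathfrak{k})$ (guaranteed by the Hermitian hypothesis) together with connectedness of $K$, while the kernel computation is precisely the normalization built into the choice of $Z$. Everything else is the standard reduction of characters to the abelianization and the duality for the circle, so I expect the argument to be short once these structural facts about $K=S\!\left(U(p)\times U(q)\right)$ are in place.
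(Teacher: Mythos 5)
The paper gives no argument for this proposition at all --- it is stated with a bare citation to \cite{Schlichtkrull} --- so there is nothing internal to compare against; what you have written is a correct, self-contained proof, and it is the natural one. Reducing characters to the abelianization $K/[K,K]$, identifying $K/[K,K]$ with $\mathbb{R}/2\pi\mathbb{Z}$ via $t\mapsto\pi(\exp(tZ))$ (surjective because $\pi_*Z\neq 0$ spans the one-dimensional Lie algebra of the connected target, with kernel $2\pi\mathbb{Z}$ by the normalization of $Z$), and then quoting the classification of characters of the circle settles both directions at once; as a bonus it establishes that the piecewise formula for $\chi_l$ actually extends to a well-defined homomorphism on all of $K=\exp(\mathbb{R}Z)\,[K,K]$, a point the paper never addresses, and it gives uniqueness of $l$. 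The one hypothesis you should state more carefully is that $\mathfrak{z}(\mathfrak{k})$ is \emph{one-dimensional}: the paper's standing assumption is only that $\mathfrak{z}(\mathfrak{k})\neq 0$ (equivalently that $U/K$ is Hermitian), and for a reducible Hermitian symmetric space $\dim\mathfrak{z}(\mathfrak{k})$ can exceed one, in which case $K/[K,K]$ is a higher-dimensional torus and the proposition as stated is false. Here this is harmless because $U/K$ is the irreducible space $SU(p+q)/S(U(p)\times U(q))$, whose isotropy algebra has center spanned by $\mathrm{diag}\left(iqI_p,\,-ipI_q\right)$, so your argument goes through; but the reliance on irreducibility (simplicity of $\mathfrak{g}$) rather than merely on the Hermitian condition deserves an explicit sentence.
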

Let $\pi_{\lambda}:U \longrightarrow GL(E_{\lambda})$ be an irreducible unitary representation of $U$ with highest weight $\lambda$. For a non-trivial character $\chi_l$ of $K$, let
\begin{equation}\label{fixed vectors by K}
E_{\lambda}^{l}=\left\lbrace X \in E_{\lambda} \, \arrowvert \,  \pi_\lambda\left(k \right)\left(X \right) =\chi_l\left( k\right)X    \text{ for all } k \in K\right\rbrace. 
\end{equation}
The representation  $\left(  \pi_\lambda, E_{\lambda}\right)$ 
of $U$ is said to be $\chi_l$-spherical if $E_{\lambda}^{l} \neq 0$. The spherical representations correspond to the case  $l=0$. \\

Fix an integer $l$ in $\mathbb{Z}$, from now on we will denote by $\widehat{U}_l$ the set of $\chi_l$-spherical irreducible unitary representations of $U$ and $\Lambda^+(U)$   the set of highest weights of irreducible representation of $U$. 
It is  well known that if $U/K$ is a symmetric space and $\left(\pi_\lambda, E_\lambda\right)$ is a $\chi_l$-spherical representation, then $\dim_{\mathbb{C}} E_{\lambda}^l =1$. The set of highest weights of $\chi_l$-spherical representations of $U$ will be denoted by $\Lambda_l^+\left( U\right) $.\\ 

Recall that a function $\varphi$ on $U$ is called (an elementary) $\chi_l$-spherical  function if  it is not identically zero, and if it satisfies the following conditions:
 \begin{align}\label{spherical-def1}
\varphi(k_1uk_2)= \chi_l(k_1k_2)^{-1} \varphi(u)  , \text{ for all } u\in U ,\; k_1,k_2 \in K,
\end{align}
and 
\begin{align}\label{spherical-def2}
\int_K \varphi( u_1ku_2 ) \chi_l(k) \;d\mu_K(k)= \varphi(u_1)\varphi(u_2), \text{ for all } u_1,u_2\in U.
\end{align}
Actually it can be proved that $\left(\ref{spherical-def1} \right) $ is a consequence of $\left(\ref{spherical-def2} \right) $, see \cite{{Ho-Olafsson}}.\\

For  $\lambda \in \Lambda_l^+\left( U\right)$, let
 $\left(.,.\right)_{\lambda}$ be a $U$-invariant inner product on $E_\lambda$, i.e., 
 \[
  \left(\pi_\lambda\left(u\right)X, \pi_\lambda\left(u\right)Y \right)_{\lambda}=\left(X,Y \right)_\lambda. 
 \]
 for all $u \in U$ and $X, Y\in E_\lambda$,
  and let $\| \cdot    \|_\lambda$ the corresponding norm. Choose an orthonormal basis $\lbrace e_1^\lambda , \cdots , e^\lambda_{d_\lambda}\rbrace$   of $ E_\lambda$, such that $ e_1^\lambda \in E_\lambda^l$, where $ d_{\lambda}=\dim E_{\lambda}$. \\

It is well known that $\psi_{\lambda,l}(u) = \left( e_1^\lambda,\pi_\lambda(u)  e_1^\lambda  \right)_{\lambda}$ is an elementary $\chi_l$- spherical function and conversely  any elementary $\chi_l$- spherical function on $U$ is of this form.

\section{Absolute Continuity of Convolutions of $\chi_l$-Orbital Measures}\label{Absolute Continuity}

Let $a \in U\backslash N_U(K)$, and let $\I_{a,\chi_l}$  be as in section $\ref{intro}$. By the  Riesz representation theorem, to the functional $\I_{a,\chi_l}$ corresponds a unique $\mathbb{C}$-valued regular Borel measure $\mu_{a, \chi_l}$ on $U$, such that 
$$
\I_{a,\chi_l}\left(f \right) =\int_{U}f(g) d\,\mu_{a, \chi_l}(g),\, \, \, f\in \C\left(U\right).
$$ 
Fix a set of points $a_1, \cdots, a_r$ in $U\backslash N_U(K)$ and consider the linear functional
\begin{small}
$$
\I_{a_1,a_2,\cdots, a_r, \chi_l}\left( f\right) =\int_K \cdots\int_K f\big(k_1a_1k_2\cdots k_ra_rk_{r+1}\big)\chi_l\left(k_1\cdots k_{r+1} \right)^{-1}d\mu_K(k_1) \cdots d\mu_K(k_{r+1}). 
$$
\end{small}
Again, by the Riesz representation theorem, to the functional $\I_{a_1,a_2,\cdots, a_r, \chi_l}$ corresponds a measure $\mu_{a_1,\cdots, a_r, \chi_l}$, such that
$$
\I_{a_1,a_2,\cdots, a_r, \chi_l}\left(f \right) =\int_U f(g)d\,\mu_{a_1,\cdots, a_r, \chi_l}\left( g\right)
$$
for all $f$ in $\C\left(U\right)$.  
\begin{proposition}
$$
\mu_{a_1,a_2,\cdots, a_r, \chi_l}=\mu_{a_1, \chi_l}\ast \cdots \ast \mu_{a_r, \chi_l}.
$$
\end{proposition}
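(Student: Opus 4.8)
The plan is to establish the claimed identity of measures by showing that the two measures induce the same functional on $\mathcal{C}(U)$; since both sides are regular Borel measures arising from Riesz representation, the uniqueness part of the Riesz theorem then forces them to coincide. I would argue by induction on $r$, exploiting the associativity of convolution. The base case $r=1$ is immediate, as both sides reduce to $\mu_{a_1,\chi}$ by definition. For the inductive step, associativity gives
\[
\mu_{a_1,\chi}\ast\cdots\ast\mu_{a_r,\chi}=\big(\mu_{a_1,\chi}\ast\cdots\ast\mu_{a_{r-1},\chi}\big)\ast\mu_{a_r,\chi},
\]
and the inductive hypothesis identifies the first factor with $\mu_{a_1,\cdots,a_{r-1},\chi}$, so it suffices to prove the single-step identity $\mu_{a_1,\cdots,a_{r-1},\chi}\ast\mu_{a_r,\chi}=\mu_{a_1,\cdots,a_r,\chi}$.

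For a fixed $f\in\mathcal{C}(U)$ I would unfold the convolution using the standard definition $(\mu\ast\nu)(f)=\int_U\int_U f(gh)\,d\mu(g)\,d\nu(h)$. Applying the definition of $\mu_{a_1,\cdots,a_{r-1},\chi}$ to the continuous function $g\mapsto f(gh)$ produces the iterated $K$-integral over variables $k_1,\dots,k_r$ evaluated at $k_1a_1k_2\cdots k_{r-1}a_{r-1}k_r h$, weighted by $\chi(k_1\cdots k_r)^{-1}$. Substituting $h=\ell_1 a_r\ell_2$ through the definition of $\mu_{a_r,\chi}$, with weight $\chi(\ell_1\ell_2)^{-1}$, and invoking Fubini's theorem (legitimate since $f$ is bounded on the compact group $U$ and all the measures have finite total variation) yields a single iterated integral over $K^{r+1}$ of
\[
f\big(k_1a_1k_2\cdots k_{r-1}a_{r-1}k_r\ell_1 a_r\ell_2\big)\,\chi(k_1\cdots k_r)^{-1}\chi(\ell_1\ell_2)^{-1}.
\]

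The crux of the argument is the merging of the two adjacent $K$-integrations in the variables $k_r$ and $\ell_1$ into the single variable appearing in $\mathcal{I}_{a_1,\cdots,a_r,\chi}$. Here I would perform the change of variables $s=k_r\ell_1$ in the $k_r$-integral; by the right-invariance of the normalized Haar measure $\mu_K$ this leaves $d\mu_K(s)$ unchanged, while the multiplicativity of the character gives $\chi(k_r)^{-1}=\chi(s)^{-1}\chi(\ell_1)$. The factor $\chi(\ell_1)$ exactly cancels the $\chi(\ell_1)^{-1}$ coming from $\chi(\ell_1\ell_2)^{-1}$, so that the integrand no longer depends on $\ell_1$; the remaining $\ell_1$-integral then contributes the factor $\mu_K(K)=1$. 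After relabeling $s\to k_r$ and $\ell_2\to k_{r+1}$, the resulting expression is precisely $\mathcal{I}_{a_1,\cdots,a_r,\chi}(f)=\int_U f\,d\mu_{a_1,\cdots,a_r,\chi}$, completing the induction. The main obstacle is bookkeeping this merge correctly: it is where the homomorphism property of $\chi$ and both the invariance and the normalization of $\mu_K$ must be used simultaneously, and it is the only place where the argument is more than a formal unwinding of definitions.
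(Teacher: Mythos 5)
Your proof is correct and follows essentially the same route as the paper's: unfold both convolution factors into iterated $K$-integrals via Fubini, then merge the two adjacent $K$-variables using the right-invariance of the normalized Haar measure together with the multiplicativity of $\chi$, so that one integration collapses to the factor $\mu_K(K)=1$. The paper carries out exactly this merge explicitly for $r=2$ and then appeals to induction, which is the induction you have simply written out in full via associativity of convolution.
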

\begin{proof} Let $ f \in \C\left(U\right)$ and let $r=2$. Since $\mu_K$ is normalized, we have
 
  	\begin{footnotesize}
  	\begin{align*}
	\int_U f(g) \;d\mu_{a_1, \chi_l}\ast \mu_{a_2, \chi_l} (g)
	&=\int_U\int_Uf(g_1g_2) d\mu_{a_1, \chi_l}(g_1)d\mu_{a_2, \chi_l}(g_2)\\&= \int_K \int_K \int_K\int_K  f(k_1a_1k_2k_3a_2k_4) \chi_l^{-1}(k_1k_2k_3k_4) \;d\mu_K(k_1)d\mu_K(k_2)d\mu_K(k_3)d\mu_K(k_4)
	\\&= \int_K \int_K \int_K \left(\int_K  f(k_1a_1k_2k_3a_2k_4) \chi_l^{-1}(k_1k_2k_3k_4) \;d\mu_K(k_2)\right)d\mu_K(k_1)d\mu_K(k_3)d\mu_K(k_4) 
	\\&= \int_K\int_K \int_K \left(\int_K  f(k_1a_1k_5a_2k_4) \chi_l^{-1}(k_1k_5k_4) \;d\mu_K(k_5)\right)d\mu_K(k_1)d\mu_K(k_3)d\mu_K(k_4) \\
	&= \int_K \int_K \int_K f(k_1a_1k_2a_2k_3) \chi_l^{-1}(k_1k_2k_3) \;d\mu_K(k_1)d\mu_K(k_2)d\mu_K(k_3)\\
	&= \int_U f(g) \;d\mu_{a_1,a_2,\chi_l}(g).
	\end{align*}
  	\end{footnotesize}
   	
	By induction, we get
	$$
	\int_U f(g) d \big(\mu_{a_1, \chi_l}\ast \cdots \ast \mu_{a_{r}, \chi_l}\big)(g)
	=   \int_U f(g) \;d \mu_{a_1,a_2,\cdots, a_{r}, \chi_l}, \label{Convolution equality}
	$$
	for all $f \in \C\left(U\right)$. Hence the proposition.
\end{proof}
\begin{lemma}\label{Characteristic lemma}
For every Borel subset $\mathscr{B}$ of $U$, we have 

	\begin{itemize}  
	\item[(i)] 
	$
	\mu_{a, \chi_l}(\mathscr{B})=\int _{U}{\mathds{1}}_\mathscr{B}(x)\,d \mu_{a, \chi_l}(x) =\int\limits_{K}\int\limits_{K}{\mathds{1}}_\mathscr{B}\left(  k_{1}%
	ak_{2}\right) \chi_l\left(k_1k_2 \right)^{-1}   d\mu_K\left(  k_{1}\right)  d\mu_K\left(  k_{2}\right),
	$\\
	and \\
	
	\item[(ii)] 
	\begin{small}
	$
	\int_U {\mathds{1}}_\mathscr{B}(x) \;d\mu_{a_1, \dots ,  a_r, \chi_l}=\int_K  \dots\int_K {\mathds{1}}_\mathscr{B}\left(k_1a_1k_2\cdots k_ra_rk_{r+1} \right)\chi_l(k_1 \cdots k_{r+1})^{-1} d\mu_{K}\left(k_1 \right)  \cdots d\mu_{K}\left( k_{r+1}\right) ,
	$
	\end{small}
	\end{itemize}

where  ${\mathds{1}}_\mathscr{B}$ is the characteristic function of $\mathscr{B}$.

\end{lemma}

\begin{proof}
	
	Consider the continuous mapping $ \tau_a:K\times K \longrightarrow U$ such that $$ \tau_a(k_1,k_2)=k_1ak_2.$$ Let $\lambda_{a}$ be the push-forward of the product measure $\mu_K \times \mu_K$ on $K \times K$ under the map $\tau_a$, i.e., for a Borel subset $\mathscr{B}$ of $U$,
	$$
	\lambda_{a}\left( \mathscr{B}\right) =\left( \tau_a\right) _*\left( \mu_K \times \mu_K\right)\left(\mathscr{B} \right)= \big(\mu_K \times \mu_K\big)\left(\tau_a^{-1}\left( \mathscr{B}\right)  \right).   
	$$
	Hence
	$$
	\lambda_{a}\left( \mathscr{B}\right)=\int\limits_{K}\int\limits_{K}{\mathds{1}}_\mathscr{B}\left(  k_{1}%
	ak_{2}\right)   d\mu_K\left(  k_{1}\right)  d\mu_K\left(  k_{2}\right) .
	$$
	By   (Corollary 1 \cite{Stromberg}), $ \lambda_{a}$ is regular.
	Define the complex measure $ \lambda_{a, \chi_l}$ on $U$ by  \begin{align*}
	\lambda_{a, \chi_l}(\mathscr{B})= \int\limits_{K}\int\limits_{K}{\mathds{1}}_\mathscr{B}\left(  k_{1}%
	ak_{2}\right) \chi_l\left(k_1k_2 \right)^{-1}   d\mu_K\left(  k_{1}\right)  d\mu_K\left(  k_{2}\right).
	\end{align*}
	Fix a Borel set $\mathscr{B}$, and let $\epsilon>0$. Since $\lambda_{a}$ is regular, there exist an open set $O$ and a compact set $C$ of $U$ such that $C\subseteq \mathscr{B} \subseteq O$ and 
	 \begin{align*}
	\lambda_{a}(\mathscr{B} \backslash C) < \epsilon \;\;\;\; \text{ and } \;\;\;\;  \lambda_{a}(O \backslash \mathscr{B} )< \epsilon. 
	\end{align*}   
    Therefore
    \begin{align*}
	|\lambda_{a, \chi_l}|(\mathscr{B} \backslash C)&= \sup   \sum _{i=1}^n\left|\lambda_{a, \chi_l} ( \mathscr{B}_j)\right| \\&\leq \sup   \sum _{i=1}^n\lambda_{a} ( \mathscr{B}_j) \\ &=\lambda_{a}(\mathscr{B} \backslash C) \\&<\epsilon,
	\end{align*} where the supremum is taken over all partitions of $\mathscr{B}\backslash C$  into a finite number of Borel sets $\mathscr{B}_1,\dots \mathscr{B}_n$. Similarly $ |\lambda_{a, \chi_l}|(O \backslash \mathscr{B})< \epsilon$. Hence $\lambda_{a, \chi_l}$ is regular. A similar  argument as in (Theorem 2, \cite{Stromberg}), shows that for any Borel set $\mathscr{B}$, \\ $\lambda_{a, \chi_l}(\mathscr{B}) = \mu_{a, \chi_l}(\mathscr{B})
	$, which proves part $(i)$ of the Lemma. Part $(ii)$ of the Lemma  follows from  ( \cite{Stromberg}, Theorem 2).
\end{proof}

\begin{proposition}  \label{Zonel-proposition}
Let $ a_i\in U\backslash N_U(K)$, $i=1,\dots, r$ and   $\psi^r: K^{r+1}\longrightarrow U$, be such that  \begin{align*}
\psi^r (k_1,\dots k_{r+1}) = k_1a_1k_2\dots k_ra_rk_{r+1} .
\end{align*}
Then $ \rank \psi^r \geq \min\left(\dim U, r+\dim K\right)$, 
except on a proper analytic subvariety of $K^{r+1}$.
\end{proposition}
\begin{proof} By 
\cite{Wolf3}, the pair $(U,K)$ satisfies condition $(2.1)$ in \cite{RaZonal}. So the Proposition follows from (\cite{RaZonal}, Proposition 2.2).
\end{proof}

\begin{theorem} \label{Rgozin Theorem}
Let $(U,K)$ and  $\mu_{a_i, \, \chi_l}$, $i=1,\dots , r$ be as above. If $r \geq \dim U /K$ then $ \mu _{a_1, \dots , a_r, \, \chi_l}$ is absolutely continuous with respect to the Haar measure $\mu_U$ of  $U$. 
\end{theorem}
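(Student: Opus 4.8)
The plan is to realize the convolution as a pushforward and extract absolute continuity from a submersion property. Write $\Phi\colon K^{r+1}\to U$ for $\Phi(k_1,\dots,k_{r+1})=k_1a_1k_2\cdots k_ra_rk_{r+1}$. By the integral formula of Lemma~\ref{Characteristic lemma}(ii), $\mu_{a_1,\dots,a_r,\chi_l}=\Phi_{*}\big(\Psi\,\mu_K^{\otimes(r+1)}\big)$, where $\Psi(k_1,\dots,k_{r+1})=\chi_l(k_1\cdots k_{r+1})^{-1}$. Since $\chi_l$ is unitary we have $|\Psi|\equiv 1$, so the total variation obeys $|\mu_{a_1,\dots,a_r,\chi_l}|\le \Phi_{*}\big(\mu_K^{\otimes(r+1)}\big)=:\lambda$, the character-free orbital convolution. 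As $\lambda\ll\mu_U$ would force $\mu_{a_1,\dots,a_r,\chi_l}\ll\mu_U$, it suffices to treat the trivial character and prove $\lambda\ll\mu_U$.

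First I would note that $\mu_K^{\otimes(r+1)}$ is a smooth positive measure on the compact, connected, real-analytic manifold $K^{r+1}$, and that $\Phi$ is real-analytic. If $d\Phi$ is surjective at even a single point, then the full-rank locus $S$ is open and its complement, being the common zero set of the maximal Jacobian minors, is a proper real-analytic subset, hence $\mu_K^{\otimes(r+1)}$-null; therefore $\lambda=\Phi_{*}(\mu_K^{\otimes(r+1)}|_S)$, since the restriction to the null set $S^{c}$ is the zero measure and pushes forward to $0$. On $S$ the map is a genuine submersion, so in submersion charts Fubini gives $\Phi_{*}(\mu_K^{\otimes(r+1)}|_S)\ll\mu_U$. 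Everything thus reduces to exhibiting one point of $K^{r+1}$ at which $d\Phi$ is onto $\mathfrak u$.

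Computing $d\Phi$ in the right trivialization $T_gU\cong\mathfrak u$, the variation in the $i$-th slot contributes $\mathrm{Ad}(d_i)\mathfrak k$, where $d_i=k_1a_1k_2\cdots k_{i-1}a_{i-1}$ (the trailing $k_i$ is absorbed, since $\mathrm{Ad}(k)\mathfrak k=\mathfrak k$ for $k\in K$); hence the image of $d\Phi$ is $\sum_{i=1}^{r+1}\mathrm{Ad}(d_i)\mathfrak k$. Because $d_1=e$ we always have $\mathfrak k\subseteq\sum_{i=1}^{r+1}\mathrm{Ad}(d_i)\mathfrak k$, so surjectivity onto $\mathfrak u$ is equivalent, modulo $\mathfrak k$, to the $r$ subspaces $\mathrm{pr}_{\sqrt{-1}\mathfrak p}\,\mathrm{Ad}(d_{i+1})\mathfrak k$, $i=1,\dots,r$, spanning $\sqrt{-1}\mathfrak p$, which has dimension $\dim U/K$. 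I would build the $k_i$ greedily: at stage $i$ the element $d_i$ is already fixed and $d_{i+1}=d_i\,k_i a_i$, so only $k_i$ is free; if the span $W$ accumulated so far is a proper subspace of $\sqrt{-1}\mathfrak p$, choose $k_i$ so that $\mathrm{pr}_{\sqrt{-1}\mathfrak p}\,\mathrm{Ad}(d_i k_i a_i)\mathfrak k\not\subseteq W$, gaining at least one dimension without disturbing the earlier subspaces. With $r\ge\dim U/K$ free factors this reaches all of $\sqrt{-1}\mathfrak p$, producing the required submersion point.

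The hard part is exactly this enlargement step. I must rule out that $\mathrm{pr}_{\sqrt{-1}\mathfrak p}\,\mathrm{Ad}(g k a)\mathfrak k\subseteq W$ for every $k\in K$ when $W\subsetneq\sqrt{-1}\mathfrak p$ and $a\in U\setminus N_U(K)$; such a uniform containment is rigid, and I expect it to force $a$ into the normalizer $N_U(K)$, contradicting the hypothesis. This is where the condition $a_i\notin N_U(K)$ enters essentially, and where the restricted-root description of the Grassmannian recalled in Section~\ref{Complex Grassmannians}, together with the connectedness of $K$, must replace the soft measure-theoretic part of the argument; it is the step I expect to be the main obstacle.
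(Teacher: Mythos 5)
Your reduction to the trivial character is exactly the paper's own first step: the paper likewise bounds $\left\vert \mu_{a_1,\dots,a_r,\chi_l}(S)\right\vert$ by the integral with $\chi_l$ stripped out, i.e.\ by $\mu_{K^{r+1}}\left(\upsilon^{-1}(S)\right)$ for the product map $\upsilon=\Phi$, and is thereby reduced to the character-free orbital convolution. Where the two diverge is in how that pushforward is handled: the paper proves nothing further, but invokes Ragozin's theorem after noting (via Wolf) that $(U,K)$ satisfies Ragozin's condition $(2.1)$, whereas you set out to reprove Ragozin's result through the submersion--analyticity argument. Your outline of that argument (image of $d\Phi$ equal to $\sum_i\operatorname{Ad}(d_i)\mathfrak k$, analyticity forcing the degenerate locus to be null once one full-rank point exists, Fubini in submersion charts) is the correct one --- it is essentially Ragozin's own --- but, as you say yourself, the enlargement step is not established, so the proof as written is incomplete precisely there.

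That gap is real but fillable, and what fills it is exactly the hypothesis the paper imports from \cite{Wolf3}: $\sqrt{-1}\mathfrak p$ is an irreducible $\operatorname{Ad}(K)$-module, the Grassmannian being an irreducible symmetric space. Indeed, suppose $\operatorname{pr}_{\sqrt{-1}\mathfrak p}\operatorname{Ad}(gka)\mathfrak k\subseteq W$ for every $k\in K$, with $W\subsetneq\sqrt{-1}\mathfrak p$. Then $\operatorname{Ad}(k)\left(\operatorname{Ad}(a)\mathfrak k\right)\subseteq\operatorname{Ad}(g^{-1})(\mathfrak k\oplus W)$ for all $k\in K$, so the $\operatorname{Ad}(K)$-invariant subspace generated by $\operatorname{Ad}(a)\mathfrak k$ lies in a proper subspace of $\mathfrak u$. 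Since $K$ is connected, $a\notin N_U(K)$ forces $\operatorname{Ad}(a)\mathfrak k\neq\mathfrak k$, so this invariant subspace has nonzero projection to $\sqrt{-1}\mathfrak p$; as no irreducible constituent of $\mathfrak k$ is $K$-isomorphic to $\sqrt{-1}\mathfrak p$ (the center of $\mathfrak k$ acts trivially on $\mathfrak k$ and nontrivially on $\sqrt{-1}\mathfrak p$), any invariant subspace splits along $\mathfrak k\oplus\sqrt{-1}\mathfrak p$, and irreducibility then forces it to contain all of $\sqrt{-1}\mathfrak p$ --- contradicting properness. Two cautions: first, your phrase ``the trailing $k_i$ is absorbed'' is harmless, but the $\operatorname{Ad}(d_i)$ in front is not, since $d_i\notin K$ does not preserve the Cartan decomposition; the conjugation by $g^{-1}$ above must be carried along rather than projected away prematurely. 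Second, this invariant-subspace argument is precisely the content of condition $(2.1)$ in \cite{RaZonal}, so if you are willing to cite \cite{RaZonal} and \cite{Wolf3} as the paper does, your first paragraph already completes the proof.
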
 
\begin{proof}
 Let $S\subseteq U$ be a measurable set and $ {\mathds{1}}_S$ its characteristic function. Assume that $ \mu_U(S)=0$, it will be shown that $ \mu _{a_1, \dots , a_r, \, \chi_l}(S)=0$. By Proposition \ref{Characteristic lemma} we have \begin{small}
 \begin{align*}
\left\vert \mu _{a_1, \dots , a_r, \chi_l}(S) \right\vert & =\left\vert\int _K\dots\int_K  {\mathds{1}}_S( k_1a_1k_2\dots a_rk_{r+1}) \chi_l (k_1\dots k_{r+1})\;d\mu_K(k_1)\dots d\mu_K(k_{r+1})\right\vert \\
& \leq \int _K\dots\int_K  {\mathds{1}}_S( k_1a_1k_2\dots a_rk_{r+1}) \;d\mu_K(k_1)\dots d\mu_K(k_{r+1})\\
&\leq \mu_{K^{r+1}}({(\psi^r)}^ {-1}(S)),
\end{align*}
 \end{small} where $\mu_{K^{r+1}}$ is the Haar measure of $K^{r+1}$.
To finish the proof we need to show that  
$$
\psi^r_{*}\left(\mu_{K_{r+1}} \right) \left( S\right)= \mu_{K^{r+1}}({(\psi^r)}^ {-1}(S))=0.
$$
By  Proposition \ref{Zonel-proposition},  $ \psi^r$ has full rank, i.e., $\rank \psi^r = \dim U = \dim K + \dim U/K$,  except on a proper analytic subvariety of $K^{r+1}$. This subvariety has measure zero, since it has smaller dimension than $K^{r+1}$. \\
Assume that $K^{r+1} $ (resp. $ U$) has dimension $s$ (resp. $ m$).  Note that $s\geq m$ and $ (\psi^r)^{-1} (S)$ is closed.

Consider $p'\in  (\psi^r)^{-1}(S)$  and let $p=\psi^r(p')$. {   By the submersion theorem }
there are charts  $ (\phi , H)$  centered at $p'$ and a chart  $(\varphi
, H')$)  centered at $p$ such that \\ $\psi^r(H)\subseteq H'$,  $$\varphi
 \circ \psi^r\circ \phi^{-1}(x,y)=x, \;\;\;\;\; \text { for all } x\in \mathbb{R}^m, \;y\in \mathbb{R}^{s-m},\; (x,y)\in \phi(H).$$
Let $\lambda_m$ be the Lebesgue measure of $\mathbb{R}^m$.     Since  the  Haar   measure on any Lie group is mutually absolutely continuous with respect to  Lebesgue  measure in any coordinate patch we have  $$\lambda_m(\varphi
 (H'\cap S)=0.$$ 
   Let $A\subseteq \mathbb{R}^s  $ be such that $$ A= \phi(H\cap (\psi^r)^{-1}(S)), $$ It follows that
   \begin{align*}
   A\subseteq \lbrace (x_1, \dots , x_m, x_{m+1}, \dots , x_s) \in \mathbb{R}^s \mid\; (x_1, \dots , x_m)\in \varphi
 (H'\cap S)\rbrace.
      \end{align*} By Fubini's Theorem, we have 
      \begin{align*}
      \lambda_{s} (A) &= \int_{\mathbb{R}^s} {\mathds{1}}_A(x) \;d\lambda_s(x)\\&
      = \int_{\mathbb{R}^l}\int_{\mathbb{R}^m} {\mathds{1}}_A(x,y) \;d\lambda_m(x)d\lambda_l(x) \\&\leq  \int_{\mathbb{R}^l}\int_{\mathbb{R}^m} {\mathds{1}}_{\varphi
 (H'\cap S)}(x) \;d\lambda_m(x)d\lambda_l(x)\\&= \int_{\mathbb{R}^l} \lambda_m( \varphi (H'\cap S)) \;d\lambda_l(x)\\&= 0.
      \end{align*}
Thus  for any  $x\in K^{r+1}$, there exists a  coordinate neighborhood $H \subseteq K^{r+1}$ of $x$ such that  $$\mu_{K^{r+1}}( H\cap (\psi^r)^{-1}(S))= 0.$$   
 By compactness of $K^{r+1}$, we can find a finite coordinate neighborhoods $\lbrace H_i \rbrace_{i=1}^d$ such that $$ (\psi^r)^{-1}(S) \subseteq \bigcup _{i=1}^d H_i,$$  so we have $$ (\psi^r)^{-1}(S) = \bigcup _{i=1}^d (H_i \cap (\psi^r)^{-1}(S)).$$ Therefore  $$\mu_{K^{r+1}} (\psi^r)^{-1}(S)) \leq \sum_{i=1}^d \mu_{K^{r+1}}( H_i\cap (\psi^r)^{-1}(S))=0.$$     
      Hence the Theorem.    
\end{proof}

\section{Fourier Transform of the Radon-Nikodym Derivative $f_{a_1,\dots ,a_r,\chi_l}$}	\label{Fourier Transform}
Let $\chi_l:K \longrightarrow GL\left(E_{\chi_l} \right) $ be a representation of $K$. Define an action of $K$ on $U \times E_{\chi_l}$ from the right by
\[
\left(u,X \right)k=\left(uk, \chi_l\left(k^{-1}\right)  X \right), \, \, \, \,\, \,\,  u \in U, \, \, X \in E_{\chi_l}. 
\]
Define an equivalence relation $\mathcal{R}$ on $U \times E_{\chi_l}$ by
\[
\left(u_1,X_1 \right) \mathcal{R} \left( u_2,X_2\right) \Leftrightarrow  \exists\, k \in K \text{ such that } u_2=u_1k \text{ and } X_2=\chi_l\left( k^{-1}\right)X_1. 
\]
The equivalence class of an element $\left(u,X \right) $ of $U \times E_{\chi_l}$ will be denoted by $\left[ u,X\right]$ and the set of equivalence classes by ${\mathbb{E}_{\chi_l}}=U\times_K E_{\chi_l}$, i.e.,
\[
{\mathbb{E}_{\chi_l}}=U\times_K E_{\chi_l}=\left\lbrace  \left[ u,X\right]\,\, \arrowvert \, \, u\in U \text{ and } X \in E_{\chi_l} \right\rbrace. 
\]
Then $\mathbb{E}_{\chi_l}$ is a complex vector bundle over $U/K$ of rank equal to  $\dim_{\mathbb{C}}E_{\chi_l} $.
In case $\chi_l$ is a character, i.e., $\dim_{\mathbb{C}} E_{\chi_l} =1$, or equivalently $E_{\chi_l} \cong \mathbb{C}$, then $\mathbb{E}_{\chi_l}$ is a line bundle, which we will denote, following tradition, by $\mathbb{L}_{\chi_l}$. It is easy to see that the smooth sections $\Gamma\left(U/K, \, \mathbb{L}_{\chi_l}\right) $ of $\mathbb{L}_{\chi_l}$ can be identified with smooth functions $f$ on $U$ satisfying 
$$
f\left( uk\right)=\chi_l(k)^{-1}f\left(u \right), \mbox{ for all } k\in K \mbox{ and } u \in U.
$$
Let \begin{align*}
L^2 \left(U/K, \, \mathbb{L}_{\chi_l}\right)= \left\{  f\in L^2(U) \mid f( uk)=\chi_l(k^{-1})  f(u), \text{ for all } k\in K  \right\},
\end{align*}
and let $L^2 \left(U//K, \, \mathbb{L}_{\chi_l}\right)$ be the subspace in  $L^2 \left(U/K, \, \mathbb{L}_{\chi_l}\right)$ of functions satisfying  $$f(k_1uk_2) =\chi_l( k_1k_2 )^{-1} f(u), \,\,\,\,\,\,\,\,\, \text{ for all }  k_1, k_2\in K.$$
 
Let $\lambda \in \Lambda^+_l\left(U\right)$ and, as above, let $\pi_{\lambda}:U \longrightarrow GL(E_{\lambda})$ be the irreducible unitary representation of $U$ with highest weight $\lambda$. 
For $f \in L^1(U)$, consider the bilinear form
\begin{align*}
 \Psi_\lambda :E_{\lambda} \times E_{\lambda}&\longrightarrow  \mathbb{C}  \\
(X,Y)& \longmapsto \Psi_\lambda (X,Y) =\int_U f(u)\,\bigl( \pi_{\lambda}(u)\,X, Y\bigr)_\lambda \, d\mu_U(u),\;\;\;\;\;X, Y \in E_{\lambda}.
\end{align*}
 
 Since
\[
|\Psi_\lambda(X,Y)| \;=\; \left|\int_U f(u)\,\bigl( \pi_{\lambda}(u)\,X, Y \bigr)_\lambda \, d\mu_U(u)\right|
\;\le\; \|X\|_{\lambda} \,\|Y\|_{\lambda} \,\|f\|_{L^1(U)},
\]
it follows that $\Psi_\lambda$ is continuous. By Riesz representation theorem, we deduce that there exists a continuous linear operator 
\[
\mathcal{F}\bigl(f\bigr)(\lambda)\colon E_{\lambda} \,\longrightarrow\, E_{\lambda}
\]
such that
\[
\bigl( \mathcal{F}\bigl(f\bigr)(\lambda)(X), Y\bigr)_\lambda \;=\; 
\int_U f(u)\,\bigl( \pi_{\lambda}(u)\,X, Y\bigr)_\lambda \,d\mu_U(u).
\]
The operator $\mathcal{F}\bigl(f\bigr)(\lambda)$ is called the \emph{Fourier transform} of $f$ and for simplicity we write
\[
 \mathcal{F}\bigl(f\bigr)(\lambda)\left( X\right) =\int_U f(u)\, \pi_{\lambda}(u)\left( X\right) \,d\mu_U(u).
\]

Recall that for a nontrivial character $\chi_l$ of $K$, we denoted by $E_\lambda^l$ the subspace of $E_\lambda$ defined in \eqref{fixed vectors by K}. 
Similarly, as above, there exists a continuous linear operator 
\[
P_{l,\lambda}\colon E_{\lambda}\,\longrightarrow\,E_{\lambda}
\]
such that
\[
 \bigl( P_{l,\lambda}(X),\,Y\bigr)_\lambda 
\;=\;
\int_K \chi_l\bigl(k^{-1}\bigr)\,\bigl(\pi_\lambda(k)\,X,\; Y\bigr)_\lambda \, d\mu_K(k) ,\,\,\,\,\, X,Y\in E_\lambda.
\]
For simplicity, we will write
\begin{align*}
	P_{l, \lambda} (X) =\int_K \chi_l(k^{-1}) \pi_\lambda(k) X\;d\mu_K(k) , \;\;\; X\in E_\lambda.
\end{align*}
Recall that $\lbrace e_1^\lambda , \cdots , e^\lambda_{d_\lambda}\rbrace$   is  an orthonormal basis  of $ E_\lambda$, where  $ e_1^\lambda \in E_\lambda^l$ and  \\ $$d_\lambda=\dim E_\lambda .$$
\noindent {   It can be  shown that 
\[
\pi_\lambda(k)\, P_{l,\lambda}(X) = \chi_l(k)\, P_{l,\lambda}(X), 
  \]
for every \( k \in K \) and \( X \in E_\lambda \). 
Thus  $ P_{l,\lambda}(X) \in E_\lambda^l $.
}
\vspace{0.3em}
\noindent
Moreover, the operator \( P_{l,\lambda} \) is both self-adjoint and idempotent. Hence, it is the orthogonal projection of \( E_\lambda \) onto the subspace \( E_\lambda^l \). Consequently,
\[
\bigl(P_{l,\lambda}(e_j^\lambda),\, e_1^\lambda \bigr)_\lambda = 0,
\qquad \text{for } j = 2, 3, \dots, d_\lambda .
\]

{

\begin{lemma} \label{orthogonal projection 1}
Let $f \in L^1(U/K; \chi_l)$. Then:

\begin{itemize}
  \item[(i)] For each $\lambda \in \Lambda^+(U)$, we have
  \[
    \mathcal{F}(f)(\lambda) = \mathcal{F}(f)(\lambda) \, P_{l,\lambda}.
  \]

  \item[(ii)] For $\lambda \in \Lambda^+(U) \setminus \Lambda^+_l(U)$, we have
  \[
    \mathcal{F}(f)(\lambda) X = 0, \quad \text{for all } X \in E_\lambda.
  \]

  \item[(iii)] 
  \[
    \Tr\bigl(\mathcal{F}(f)(\lambda)\bigr) = \bigl( \mathcal{F}(f)(\lambda)e_1^\lambda,\, e_1^\lambda \bigr)_\lambda.
  \]
\end{itemize}
\end{lemma}

\begin{proof}
\begin{itemize}
  \item[(i)] 
  For $f \in L^1(U/K; \chi_l)$ and any $X \in E_\lambda$, we compute:
  \begin{align*}
    \mathcal{F}(f)(\lambda)\left(P_{l,\lambda}(X)\right)
    &= \int_U f(u) \, \pi_\lambda(u) \left( \int_K \chi_l(k^{-1}) \pi_\lambda(k) X \, d\mu_K(k) \right) d\mu_U(u) \\
    &= \int_K \int_U \chi_l(k^{-1}) f(u) \, \pi_\lambda(uk) X \, d\mu_U(u) \, d\mu_K(k) \\
    &= \int_K \left( \int_U f(uk) \pi_\lambda(uk) X \, d\mu_U(u) \right) d\mu_K(k) \\
    &= \int_K \left( \int_U f(u) \pi_\lambda(u) X \, d\mu_U(u) \right) d\mu_K(k) \\
    &= \int_U f(u) \pi_\lambda(u) X \, d\mu_U(u) = \mathcal{F}(f)(\lambda)(X).
  \end{align*}
  Hence,
  \[
    \mathcal{F}(f)(\lambda) = \mathcal{F}(f)(\lambda) \, P_{l,\lambda}.
  \]

  \item[(ii)] 
 Since $\lambda \not \in   \Lambda^+_l(U)$,  $E_\lambda^l =0 $.  Thus $$    \mathcal{F}(f)(\lambda) X = 0 \text{,  for all  } X \in E_\lambda.$$

  \item[(iii)] 

Since \( P_{l,\lambda} \) is the orthogonal projection onto the one-dimensional space spanned by \( e_1^\lambda \), for \( X \in E_\lambda \),
\[
P_{l,\lambda}(X) = (X, e_1^\lambda)_\lambda \, e_1^\lambda.
\]
Thus,
\begin{align*}
\mathcal{F}(f)(\lambda)(X)
&= \mathcal{F}(f)(\lambda)(P_{l,\lambda}(X)) \\
&= \mathcal{F}(f)(\lambda)\left( (X, e_1^\lambda)_\lambda \, e_1^\lambda \right) \\
&= (X, e_1^\lambda)_\lambda \, \mathcal{F}(f)(\lambda)(e_1^\lambda).
\end{align*}
It follows that
\[
\Tr\left( \mathcal{F}(f)(\lambda) \right) = \left( \mathcal{F}(f)(\lambda)(e_1^\lambda),\, e_1^\lambda \right)_\lambda.
\]

\end{itemize}
\end{proof}}

Denote by $(\cdot, \cdot)_{HS}$ the Hilbert–Schmidt inner product
 on $\End\left( E_{\lambda}\right) $,  i.e., 
 \begin{align}
\left( T, S\right) _{HS} & = \Tr\left( S^* \circ T \right) \nonumber\\
 & = \sum_{i=1}^{d_\lambda}\left( T e^\lambda_i , S e^\lambda_i \right) _{\lambda}, \label{Hilbert-Schmidt inner product} \nonumber 
 \end{align}
where $T, S \in  \End\left(E_{\lambda}\right)$, $d_\lambda=\dim E_\lambda$, $ \left\lbrace e^\lambda_i \right\rbrace_{i=1}^{d_\lambda} $ 
an orthonormal  basis of $E_{\lambda}$ and  $S^*$ the adjoint of $S$. It can be proved that this definition is independent of the choice of the orthonormal basis of $E_\lambda$. Let  $ \left\Vert . \right\Vert_{HS}$ the corresponding norm.
 
{   \begin{theorem}[Plancherel Theorem]  Let $f , f_1, f_2\in L^2 \left(U/K, \, \mathbb{L}_{\chi_l}\right)$. Then 
 \begin{align*}
\left( f_1,f_2 \right)_{L^2(U)}&= \sum _{\lambda\in \Lambda_l^+\left(U \right)} d_\lambda \left( \mathcal{F}(f_1 ) (\lambda), \mathcal{F} (f_2)(\lambda) \right)_{HS}.  &  
\end{align*}
In particular,
$$
\left\|f \right\|_{L^2\left( U\right) }^2 =\sum _{\lambda\in \Lambda_l^+\left(U \right)} d_\lambda \left\|  \mathcal{F}(f) (\lambda)\right\|_{HS}^2.
$$
\end{theorem}
}

Suppose that $\mu_{a_1,\dots ,a_r, \, \chi_l}$ is absolutely continuous with respect to the Haar measure $\mu_U$ on $U$, and denote by $f_{a_1,\dots ,a_r,\chi_l}$  the Radon-Nikodym derivative of  $\mu_{a_1,\dots ,a_r, \, \chi_l}$   with respect to $\mu_U$, i.e., $$ f_{a_1,\dots ,a_r,\chi_l} d\,\mu_U=d\,\mu_{ a_1,\dots ,a_r, \, \chi_l}.
$$
{   The following result will be used in the proof of Proposition \ref{Sobloev norm estimation}.}
\begin{lemma}\label{fouriour1 }
	$$
	\mathcal{F}\left(  f_{a_{1},...,a_{r},\chi_l}\right)(\lambda)e_1^\lambda = \left(  \mathcal{F}\left(  f_{a_{1},...,a_{r},\chi_l}\right)(\lambda)e_1^\lambda,e_1^\lambda \right)_\lambda e_1^\lambda.
	$$
\end{lemma}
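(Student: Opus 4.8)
The plan is to compute $\mathcal{F}\!\left(f_{a_{1},\dots,a_{r},\chi_l}\right)(\lambda)$ in closed form and then simply read off that its range lies in the line $E_\lambda^l=\mathbb{C}\,e_1^\lambda$. Writing $f=f_{a_{1},\dots,a_{r},\chi_l}$ and $\mu=\mu_{a_1,\dots,a_r,\chi_l}$, the defining relation $f\,d\mu_U=d\mu$ turns the Fourier transform into an integral against $\mu$:
\[
\mathcal{F}(f)(\lambda)=\bigintssss_U f(u)\,\pi_\lambda(u)\,d\mu_U(u)=\bigintssss_U \pi_\lambda(u)\,d\mu(u),
\]
the operator-valued integral being read entrywise in the orthonormal basis $\{e_1^\lambda,\dots,e_n^\lambda\}$ of $E_\lambda$. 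Applying Lemma~\ref{Characteristic lemma}(ii) to each matrix coefficient of $\pi_\lambda$ (equivalently, integrating these continuous coefficients against $\mu$ via its defining iterated integral over $K^{r+1}$), I would rewrite this as
\[
\mathcal{F}(f)(\lambda)=\bigintssss_K\cdots\bigintssss_K \pi_\lambda(k_1 a_1 k_2\cdots k_r a_r k_{r+1})\,\chi_l(k_1\cdots k_{r+1})^{-1}\,d\mu_K(k_1)\cdots d\mu_K(k_{r+1}).
\]

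Next I would exploit that $\pi_\lambda$ is a homomorphism and $\chi_l$ a character, so both factor over the product $k_1\cdots k_{r+1}$; this lets me distribute the $(r+1)$-fold integral and integrate one variable at a time. Each single $K$-integration is then recognized, straight from the definition of the projection, as
\[
\bigintssss_K \chi_l(k^{-1})\,\pi_\lambda(k)\,d\mu_K(k)=P_{l,\lambda},
\]
so that carrying out the integrations yields the closed form
\[
\mathcal{F}(f)(\lambda)=P_{l,\lambda}\,\pi_\lambda(a_1)\,P_{l,\lambda}\,\pi_\lambda(a_2)\,P_{l,\lambda}\cdots P_{l,\lambda}\,\pi_\lambda(a_r)\,P_{l,\lambda}.
\]
In particular the leftmost factor gives $\mathcal{F}(f)(\lambda)=P_{l,\lambda}\,\mathcal{F}(f)(\lambda)$, i.e.\ the range of $\mathcal{F}(f)(\lambda)$ is contained in $E_\lambda^l$. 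Finally, since $U/K$ is symmetric we have $\dim_{\mathbb{C}}E_\lambda^l=1$, whence $E_\lambda^l=\mathbb{C}\,e_1^\lambda$ with $e_1^\lambda$ a unit vector and $P_{l,\lambda}v=(v,e_1^\lambda)_\lambda\,e_1^\lambda$ for every $v\in E_\lambda$. Applying this to $v=\mathcal{F}(f)(\lambda)e_1^\lambda$ and using $\mathcal{F}(f)(\lambda)=P_{l,\lambda}\,\mathcal{F}(f)(\lambda)$ gives
\[
\mathcal{F}(f)(\lambda)e_1^\lambda=P_{l,\lambda}\big(\mathcal{F}(f)(\lambda)e_1^\lambda\big)=\big(\mathcal{F}(f)(\lambda)e_1^\lambda,\,e_1^\lambda\big)_\lambda\,e_1^\lambda,
\]
which is exactly the assertion.

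The only genuinely substantive point to justify is the passage to the entrywise integral together with the identification of each $K$-integration with $P_{l,\lambda}$: one must check that $u\mapsto\pi_\lambda(u)$ is integrated componentwise (a bounded continuous, hence Bochner-integrable, map) and that Fubini legitimately separates the $(r+1)$ successive $K$-integrals. This is routine but should be stated. As an alternative route that sidesteps the explicit product formula, I would first show that $f$ is left $\chi_l$-equivariant, $f(ku)=\chi_l(k)^{-1}f(u)$ — this follows from the defining integral for $\mu$, the left-invariance of $\mu_U$, and the essential uniqueness of the Radon-Nikodym derivative — and then deduce $\pi_\lambda(k)\,\mathcal{F}(f)(\lambda)=\chi_l(k)\,\mathcal{F}(f)(\lambda)$ by the change of variables $u\mapsto ku$ in the integral defining $\mathcal{F}(f)(\lambda)$; this again forces the range into $E_\lambda^l$ and concludes identically. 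I expect the product-formula route to be the cleaner of the two, since it uses only the already-defined $P_{l,\lambda}$ and Lemma~\ref{Characteristic lemma}.
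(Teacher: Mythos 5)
Your argument is correct, and your primary route is genuinely different from the paper's. The paper proves the lemma by first establishing the measure identity $f_{a_1,\dots,a_r,\chi_l}(kx)\,d\mu_U(x)=\chi_l(k)^{-1}f_{a_1,\dots,a_r,\chi_l}(x)\,d\mu_U(x)$ (testing against continuous $\phi$ and using the defining iterated integral), then performing the change of variables $u\mapsto k^{-1}u$ inside $\mathcal{F}(f)(\lambda)e_1^\lambda$ to get $\pi_\lambda(k)\big[\mathcal{F}(f)(\lambda)e_1^\lambda\big]=\chi_l(k)\,\mathcal{F}(f)(\lambda)e_1^\lambda$, and concluding from $\dim E_\lambda^l=1$ — this is precisely the ``alternative route'' you sketch at the end. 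Your main route instead derives the closed form $\mathcal{F}(f)(\lambda)=P_{l,\lambda}\pi_\lambda(a_1)P_{l,\lambda}\cdots P_{l,\lambda}\pi_\lambda(a_r)P_{l,\lambda}$ by factoring the iterated $K^{r+1}$-integral through the homomorphism property of $\pi_\lambda$ and the multiplicativity of $\chi_l$; the leftmost projection then forces the range into the line $\mathbb{C}e_1^\lambda$. Both are sound. The paper's argument is shorter for this lemma and avoids the operator-valued Fubini manipulations (which, as you note, are routine here since all matrix coefficients are continuous and the measures are finite); your product formula is strictly more informative, since taking the $(1,1)$ matrix entry of it immediately yields $\Tr\big(\mathcal{F}(f)(\lambda)\big)=\prod_{i=1}^{r}\big(\pi_\lambda(a_i)e_1^\lambda,e_1^\lambda\big)_\lambda=\prod_{i=1}^{r}\psi_{\lambda,l}(a_i^{-1})$, i.e.\ it proves the paper's subsequent trace lemma in the same stroke, and it also recovers identity (\ref{orthogonal projection 1}) with the projection appearing on both sides. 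The one point you should make explicit if you write this up is the idempotence and self-adjointness of $P_{l,\lambda}$ (which the paper asserts rather than proves), since you use $P_{l,\lambda}^2=P_{l,\lambda}$ to pass from the product formula to $\mathcal{F}(f)(\lambda)=P_{l,\lambda}\mathcal{F}(f)(\lambda)$.
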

\begin{proof}
	Note first that for each $k \in K$, the measures $f_{a_1,\dots ,a_r,\chi_l}\left(ku \right)d\mu_U\left( u\right)  $ and \\  $ \chi_l(k)^{-1} f_{a_1,\dots ,a_r,\chi_l}\left(u \right)d\mu_U\left( u\right)  $ are equal, i.e.,
	\begin{equation}\label{mesure-k-invariant}
	f_{a_1,\dots ,a_r,\chi_l}\left(ku \right)d\mu_U\left( u\right)  =  \chi_l(k)^{-1}  f_{a_1,\dots ,a_r,\chi_l}\left(u \right)d\mu_U\left( u\right) 
	\end{equation}
	for if $\phi \in \C\left(U \right) $, then 
 	\begin{scriptsize}
 	\begin{align*}
	\int_U\phi\left(u \right) f_{a_1,\dots ,a_r,\chi_l}\left(ku \right)\,d\mu_U\left( u\right)&= \int_U\phi\left(k^{-1}u \right) f_{a_1,\dots ,a_r,\chi_l}\left(u \right)\, d\mu_U\left( u\right)\\
	&=\int_K \cdots\int_K \phi\big(k^{-1}k_1a_1k_2\cdots k_ra_rk_{r+1}\big)\chi_l \left(k_1\cdots k_{r+1} \right)^{-1}d\mu_K(k_1) \cdots d\mu_K(k_{r+1})\\
	&=\int_K \cdots\int_K \phi\big(k_1a_1k_2\cdots k_ra_rk_{r+1}\big)\chi_l\left( k k_1\cdots k_{r+1} \right)^{-1}d\mu_K(k_1) \cdots d\mu_K(k_{r+1})\\
	&=  \int_U  \chi_l(k)^{-1} \phi\left(u \right) f_{a_1,\dots ,a_r,\chi_l}\left(u \right)d\mu_U\left( u\right).
	\end{align*}
 	\end{scriptsize}
 	
Therefore, from $\left(\ref{mesure-k-invariant} \right) $, we deduce that	
\begin{align*}
\pi_{\lambda}(k)\bigg[ 	\mathcal{F}\left(  f_{a_{1},...,a_{r},\chi_l}\right)(\lambda)e_1^\lambda\bigg] &=\int_U f_{a_{1},...,a_{r}}\left(u \right) \pi_{\lambda}\left(ku \right)e_1^\lambda \,d\mu_U \left(u \right)\\
&= \int_U f_{a_{1},...,a_{r}}\left(k^{-1}u \right) \pi_{\lambda}\left(u \right)e_1^\lambda \,d\mu_U \left(u \right) \\
&=\int_U  \chi_l(k) f_{a_{1},...,a_{r}}\left(u \right) \pi_{\lambda}\left(u \right)e_1^\lambda \,d\mu_U \left(u \right)\\
&=  	 \chi_l(k)\mathcal{F}\left(  f_{a_{1},...,a_{r},\chi_l}\right)(\lambda)e_1^\lambda .
	\end{align*} 
	{     Thus $ 	\mathcal{F}\left(  f_{a_{1},...,a_{r},\chi_l}\right)(\lambda)e_1^\lambda\in E_\lambda^l$.}   Since $\dim E_{\lambda}^l=1$, we deduce that
	$$
		\mathcal{F}\left(  f_{a_{1},...,a_{r},\chi_l}\right)(\lambda)e_1^\lambda=c\, \,e_1^\lambda,
	$$
	for some constant $c$, which is easily seen to be $\left(  \mathcal{F}\left(  f_{a_{1},...,a_{r},\chi_l}\right)(\lambda)e_1^\lambda,e_1^\lambda \right)_\lambda$. Hence the Lemma.  
\end{proof}

\begin{proposition} \label{trace-spherical-proposition}
\begin{align}\Tr(\F\left(f_{a_1,\dots ,a_r,\chi_l}\right)(\lambda)) = \prod\limits_{i=1}^{r}
	\psi_{\lambda,l}\left(a_i^{-1}\right).\label{trace-spherical}
\end{align}

\end{proposition} 
\begin{proof}
 Observe that 
\begin{small}
\begin{align}
\hspace*{-0.5 cm} \Tr(\F\left(f_{a_1,\dots ,a_r,\chi_l}\right)(\lambda))&=\left(\F\left(f_{a_1,\dots ,a_r,\chi_l}\right)(\lambda)e_1^\lambda, e_1^\lambda\right)_{\lambda}\;\;\;\;\;\;\;\;\;\; \;\;\;\;\;\;\;(\text{By Lemma }  \ref{orthogonal projection 1} )
              \nonumber  \\&=\int_U f_{a_1,\dots ,a_r,\chi_l}\left( u\right)  \left(\pi_\lambda \left(u \right)  e_1^\lambda,e_1^\lambda\right)_{\lambda} d\mu_U \left(u \right)\;\;\;\;\;\;\; 
                 \nonumber\\&=\int_U f_{a_1,\dots ,a_r,\chi_l}\left( u\right)   \psi_{\lambda,l}(u^{-1}) d\mu_U \left(u \right)\nonumber \\&=\int_U  \overline{\psi_{\lambda,l}(u)} \; d\mu_{ a_1,\dots ,a_r, \, \chi_l}(u)
              \label{inverse equation 1}   \\&=\int_K  \dots\int_K\overline{ \psi_{\lambda,l}\left(k_1a_1k_2\cdots k_ra_rk_{r+1} \right)}\chi_l(k_1 \dots k_{r+1})^{-1} d\mu_{K}\left(k_1 \right)  \cdots d\mu_{K}\left( k_{r+1}\right)
            \nonumber    \\&=\int_K  \dots\int_K \psi_{\lambda,l}\left((k_1a_1k_2\cdots k_ra_rk_{r+1})^{-1} \right)\chi_l(k_1 \dots k_{r+1})^{-1} d\mu_{K}\left(k_1 \right)  \cdots d\mu_{K}\left( k_{r+1}\right) 
             \label{inverse equation 2}  \\&=\int_K  \dots\int_K \psi_{\lambda,l}\left( k_{r+1}^{-1}a_r^{-1}k_r^{-1}\cdots k_2^{-1}a_1^{-1}k_{ 1}^{-1} \right)\chi_l(k_1 \dots k_{r+1})^{-1} d\mu_{K}\left(k_1 \right)  \cdots d\mu_{K}\left( k_{r+1}\right)
               \nonumber \\&=\int_K  \dots\int_K \psi_{\lambda,l}\left(  a_r^{-1}k_r^{-1}\cdots k_2^{-1}a_1^{-1}  \right)\chi_l(k_2 \dots k_{r})^{-1} d\mu_{K}\left(k_1 \right)  \cdots d\mu_{K}\left( k_{r+1}\right), \nonumber
\end{align} 
\end{small}
where \eqref{inverse equation 1} and \eqref{inverse equation 2} are obtained by using the fact that $ \overline{\psi_{\lambda,l}(u)}=\psi_{\lambda,l}\left(u^{-1}\right)$ for all  $u\in U$.\\

Since 
\begin{align*}
\int_K \psi(u_1ku_2)\chi_l(k)d\mu_K(k) = \psi(u_1)\psi(u_2), \text{ for all } u_1,u_2\in U,
\end{align*}
we infer that
 \begin{align} \nonumber 
 \hspace{-0.5cm} \Tr(\F\left(f_{a_1,\dots ,a_r,\chi_l}\right)(\lambda)) &=\int_K  \dots\int_K \psi_{\lambda,l}\left(  a_r^{-1}k_r^{-1}\cdots k_2^{-1}a_1^{-1}  \right)\chi_l(k_2 \dots k_{r})^{-1} d\mu_{K}\left(k_2 \right)  \cdots d\mu_{K}\left( k_{r }\right) 
              \\ \nonumber
               &= \psi_{\lambda,l}(a_1^{-1})\psi_{\lambda,l}(a_r^{-1}) \\ \nonumber
               &\;\;\; \times \int_K  \dots\int_K \psi_{\lambda,l}\left(  a_{r-1}^{-1}k_{r-1}^{-1}\cdots k_3^{-1}a_2^{-1}  \right)\chi_l(k_3 \dots k_{r-1})^{-1} d\mu_{K}\left(k_3 \right)  \cdots d\mu_{K}\left( k_{r-1 }\right) \\ \nonumber
                &\;\;\;\;\vdots \\ \nonumber
               & = \prod\limits_{i=1}^{r}
	\psi_{\lambda,l}\left(a_i^{-1}\right).
\end{align}
 Hence the Proposition.
  
\end{proof}

Let $\Delta$ denote the Casimir operator, $\langle \cdot, \cdot \rangle$ 
the inner product on $\mathfrak{a}^*$ induced by the Killing form of 
$\mathfrak{u}$, and let 
\[
  \kappa_\lambda = \langle \lambda, \lambda + 2\rho \rangle
\]
be the Casimir constant, where $2\rho $ is the sum of all positive roots and $\lambda \in \mathfrak{a}^*$.

Denote by $H^s(U)$ the Sobolev space of functions in 
$L^2(U)$ whose weak derivatives up to order $s$ are in $L^2(U)$, and let 
$\|\cdot\|_{H^s(U)}$ denote the corresponding Sobolev norm. For more details, 
see \cite{anchouche2}. 
\begin{proposition}\label{Sobloev norm estimation}
With the notation above, we have
	\begin{align*}
	\left\Vert  f_{a_{1},...,a_{r},\chi_l}\right\Vert _{H^{s}\left(U\right)}^{2}%
	=\sum\limits_{\lambda\in \Lambda_l^+\left(U \right)
		}d_\lambda\left(  1+\kappa_\lambda\right)  ^{s}\prod\limits_{i=1}^{r}\left\vert
	\psi_{\lambda,l}\left(a_i\right)  \right\vert ^{2}. 
\end{align*}		
\end{proposition}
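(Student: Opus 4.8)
The plan is to move everything to the Fourier side and reduce the Sobolev norm to the Hilbert--Schmidt norms of the operators $\mathcal{F}(f_{a_1,\dots,a_r,\chi_l})(\lambda)$, which the two preceding Lemmas have already made completely explicit. First I would note that $f_{a_1,\dots,a_r,\chi_l}$ is bi-$K$-covariant, so that it lies in $L^2(U//K,\mathbb{L}_\chi)$ and in particular $\left(\ref{orthogonal projection 1}\right)$ applies. Writing $P_{l,\lambda}v=(v,e_1^\lambda)_\lambda\,e_1^\lambda$ (since $P_{l,\lambda}$ is the orthogonal projection onto the line $E_\lambda^l=\mathbb{C}e_1^\lambda$), and combining $\left(\ref{orthogonal projection 1}\right)$ with the first of the two Lemmas, the operator $\mathcal{F}(f_{a_1,\dots,a_r,\chi_l})(\lambda)$ is seen to have rank at most one:
$$
\mathcal{F}(f_{a_1,\dots,a_r,\chi_l})(\lambda)v=\mathcal{F}(f_{a_1,\dots,a_r,\chi_l})(\lambda)P_{l,\lambda}v=(v,e_1^\lambda)_\lambda\,\mathcal{F}(f_{a_1,\dots,a_r,\chi_l})(\lambda)e_1^\lambda=c_\lambda\,(v,e_1^\lambda)_\lambda\,e_1^\lambda,
$$
where $c_\lambda=\bigl(\mathcal{F}(f_{a_1,\dots,a_r,\chi_l})(\lambda)e_1^\lambda,e_1^\lambda\bigr)_\lambda=\Tr\bigl(\mathcal{F}(f_{a_1,\dots,a_r,\chi_l})(\lambda)\bigr)$ by $\left(\ref{orthogonal projection 2}\right)$. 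In the orthonormal basis $\{e_i^\lambda\}$ this operator has a single nonzero matrix entry $c_\lambda$ in position $(1,1)$, so $\left\Vert\mathcal{F}(f_{a_1,\dots,a_r,\chi_l})(\lambda)\right\Vert_{HS}^2=|c_\lambda|^2$. By the second Lemma $c_\lambda=\prod_{i=1}^r\psi_{\lambda,l}(a_i^{-1})$, and since $\psi_{\lambda,l}(a_i^{-1})=\overline{\psi_{\lambda,l}(a_i)}$ this yields
$$
\left\Vert\mathcal{F}(f_{a_1,\dots,a_r,\chi_l})(\lambda)\right\Vert_{HS}^2=\prod_{i=1}^r\left|\psi_{\lambda,l}(a_i)\right|^2.
$$

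Next I would bring in the Sobolev norm through the Casimir operator. On the compact group $U$ the norm $\Vert\cdot\Vert_{H^s(U)}$ is equivalent to the spectral norm $\Vert(1+\Delta)^{s/2}\cdot\Vert_{L^2(U)}$, which I would take as the working definition. Because $\Delta$ is central it preserves the space $L^2(U//K,\mathbb{L}_\chi)$, and because the Casimir acts on the irreducible component $E_\lambda$ as the scalar $\kappa_\lambda=\langle\lambda,\lambda+2\rho\rangle$, the Fourier transform intertwines $(1+\Delta)^{s/2}$ with multiplication by $(1+\kappa_\lambda)^{s/2}$, that is $\mathcal{F}\bigl((1+\Delta)^{s/2}f\bigr)(\lambda)=(1+\kappa_\lambda)^{s/2}\,\mathcal{F}(f)(\lambda)$. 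Applying the Plancherel identity in its inner-product form, with both arguments equal to $(1+\Delta)^{s/2}f_{a_1,\dots,a_r,\chi_l}$, then gives
$$
\left\Vert f_{a_1,\dots,a_r,\chi_l}\right\Vert_{H^s(U)}^2=\sum_{\lambda\in\Lambda_l^+(U)}d_\lambda\,(1+\kappa_\lambda)^s\,\left\Vert\mathcal{F}(f_{a_1,\dots,a_r,\chi_l})(\lambda)\right\Vert_{HS}^2,
$$
where the sum is restricted to $\Lambda_l^+(U)$ precisely because $\mathcal{F}(f_{a_1,\dots,a_r,\chi_l})(\lambda)$ vanishes off this set. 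Substituting the Hilbert--Schmidt norms from the first paragraph delivers the stated formula.

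The hard part will be the Sobolev step, not the Fourier-side bookkeeping: one must justify that the analytically defined $H^s(U)$ norm (weak derivatives up to order $s$ in $L^2$) coincides, up to equivalence, with the spectral norm built from $(1+\Delta)^{s/2}$, and that the Casimir eigenvalue on the $\lambda$-isotypic component is exactly $\kappa_\lambda$. Both are standard inputs --- the first via elliptic regularity and the equivalence of Sobolev norms on a compact Riemannian manifold, the second via the classical computation of the Casimir eigenvalue on a highest-weight module --- but they are the only nonformal ingredients; once granted, the rank-one structure of $\mathcal{F}(f_{a_1,\dots,a_r,\chi_l})(\lambda)$ extracted in the preceding Lemmas makes the rest of the argument purely mechanical.
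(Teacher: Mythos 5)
Your proposal is correct and follows essentially the same route as the paper: both arguments exploit the rank-one structure of $\mathcal{F}(f_{a_1,\dots,a_r,\chi_l})(\lambda)$ to identify $\left\Vert\mathcal{F}(f_{a_1,\dots,a_r,\chi_l})(\lambda)\right\Vert_{HS}^2$ with $\left\vert\Tr\left(\mathcal{F}(f_{a_1,\dots,a_r,\chi_l})(\lambda)\right)\right\vert^2=\prod_{i=1}^r\left\vert\psi_{\lambda,l}(a_i)\right\vert^2$, and then combine the spectral definition of the $H^s$ norm via $(I-\Delta)^{s/2}$, the Casimir eigenvalue $\kappa_\lambda$, and the Plancherel identity. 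The only difference is presentational: you make the rank-one matrix picture and the Sobolev-norm equivalence explicit, where the paper computes the Hilbert--Schmidt inner product basis-by-basis and takes the spectral definition of $\Vert\cdot\Vert_{H^s(U)}$ for granted.
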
 
\begin{proof}
	Note first that 
	\begin{align}
	\left( \mathcal{F}\left(  f_{a_{1},...,a_{r},\chi_l}\right)(\lambda) , \mathcal{F} ( f_{a_{1},...,a_{r},\chi_l})(\lambda) \right)_{HS} &= \sum_{i=1}^{d_\lambda} \left( \mathcal{F}\left(  f_{a_{1},...,a_{r},\chi_l}\right)(\lambda)e_i^\lambda, \mathcal{F}\left(  f_{a_{1},...,a_{r},\chi_l}\right)(\lambda)e_i^\lambda\right)_\lambda
	\nonumber\\&= \left( \mathcal{F}\left(  f_{a_{1},...,a_{r},\chi_l}\right)(\lambda)e_1^\lambda, \mathcal{F}\left(  f_{a_{1},...,a_{r},\chi_l}\right)(\lambda)e_1^\lambda\right)_\lambda
	\nonumber\\&=\left\Vert \mathcal{F}\left(  f_{a_{1},...,a_{r},\chi_l}\right)(\lambda)e_1^\lambda \right\Vert_\lambda^2             
	 \nonumber\\&=\left\Vert \left(  \mathcal{F}\left(  f_{a_{1},...,a_{r},\chi_l}\right)(\lambda)e_1^\lambda,e_1^\lambda \right)_\lambda e_1^\lambda\right\Vert_\lambda^2 \nonumber 
	  \\
	&=\left\vert\Tr\left(\mathcal{F}\left(  f_{a_{1},...,a_{r},\chi_l}\right)(\lambda)\right) \right\vert^2. & \label{Fourier transform norm to trace}
	\end{align} 
{  	Equation (\ref{Fourier transform norm to trace}) is a consequence of Lemma  \ref{orthogonal projection 1} and Lemma \ref{fouriour1 }.} \\

Combining $\eqref{trace-spherical}$ and  $\eqref{Fourier transform norm to trace}$,   we obtain
\begin{align}
\left\Vert  f_{a_{1},...,a_{r},\chi_l}\right\Vert _{H^{s}\left( U\right) }^2 & = \left\Vert (I-\Delta) ^{\frac{s}{2}}  f_{a_{1},...,a_{r},\chi_l}   \right\Vert^2_{L^2(U)} \nonumber \\
&= \left( (I-\Delta) ^{s}  f_{a_{1},...,a_{r},\chi_l} ,  f_{a_{1},...,a_{r},\chi_l} \right)_{L^2(U)}\nonumber \\ 
&= \sum _{\lambda\in \Lambda_l^+\left(U \right)} d_\lambda \left( \mathcal{F}\left((I-\Delta) ^{s}  f_{a_{1},...,a_{r},\chi_l}\right) (\lambda), \mathcal{F} ( f_{a_{1},...,a_{r},\chi_l})(\lambda) \right)_{HS}\nonumber \\
&= \sum _{\lambda\in \Lambda_l^+\left(U \right)} d_\lambda \left((1+\kappa_{\lambda})^{s} \mathcal{F}\left(  f_{a_{1},...,a_{r},\chi_l}\right)(\lambda) , \mathcal{F} ( f_{a_{1},...,a_{r},\chi_l})(\lambda) \right)_{HS} \nonumber\\
&= \sum _{\lambda\in \Lambda_l^+\left(U \right)} d_\lambda (1+\kappa_{\lambda})^{s}  \left\vert\Tr\left(\mathcal{F}\left(  f_{a_{1},...,a_{r},\chi_l}\right)(\lambda)\right) \right\vert^2 \nonumber\\
&= \sum _{\lambda\in \Lambda_l^+\left(U \right)} d_\lambda (1+\kappa_{\lambda})^{s}\prod_{i=1}^r \left\vert\psi _{\lambda,l}(a_i)\right\vert^2. 
\label{trace 3} \nonumber 
\end{align}

\end{proof}

\section{Restricted Roots on Complex Grassmannians}\label{Complex Grassmannians}

Let $p$ and $q$ be two integers such that $p \geq q \geq 2,\; n = p + q,$ and let \begin{align*}
I_{p,q}=\begin{pmatrix}
I_p&0\\
0&-I_q
\end{pmatrix},
\end{align*}
where $I_n$ is the $n\times n$-identity matrix.    Consider the non-compact Lie group  \begin{align*}
G= SU(p,q) = \left\{   g\in SL(p+q,\mathbb{C} ) \mid g^*I_{p,q}g=I_{p,q} \right\}.
\end{align*}
Let $\mathfrak{g}=  \mathfrak{su}(p,q) $ denote  the  Lie algebra of  $SU(p,q)$, then {  \begin{align*}
\mathfrak{su}\left(  p,q\right) 
&  =\left\{  A\in M_{p+q}(\mathbb{C})\mid A^{\ast}I_{p,q}+I_{p,q}A=0, \Tr\left(  A\right)  =0\right\}.
\end{align*}}
Let $\mathfrak{su}(p,q) = \mathfrak{k}+\mathfrak{p}$ be a Cartan decomposition of $\mathfrak{su}(p,q)$, with 
\begin{align*}
\mathfrak{k}=\left\{ \left(
\begin{array}
[c]{cc}%
A & 0\\
0 & B
\end{array}
\right)  \mid A\in\mathfrak{u}\left(  p\right)  ,\text{ }B\in\mathfrak{u}%
\left(  q\right)  \text{ and }\Tr\left(  A\right) +\Tr\left( B\right)  =0\right\}
\end{align*}

and \begin{align*}
\mathfrak{p}=\left\{  \left(
\begin{array}
[c]{cc}%
0 & Z\\
\overline{Z}^{T} & 0
\end{array}
\right)  \mid Z\in M_{p,q}\left(\mathbb{C}\right)  \right\}.
\end{align*} Let $ \mathfrak{u}= \mathfrak{k}+\sqrt{-1}\mathfrak{p}$ be the compact real form of $\mathfrak{sl}(p+q, \mathbb{C})$,  the complexification of $\mathfrak{su}(p,q)$. Thus $ U=SU(p+q)$ is the lie group with lie algebra $\mathfrak{u}$. \\ Take $K = S\left(U(p)\times U(q) \right)$. Then the Lie algebra of $K$ is $\mathfrak{k}$.
  
Let $\mathfrak{a}$ be a maximal abelian subspace of $\mathfrak{p}$. For $ T= (t_1, t_2, \dots , t_q)\in \mathbb{R}^q$ we  choose $\mathfrak{a}$ to be all  $(p+q)\times (p+q)$-matrices of the form \[ H_T =\left(\begin{array}
[c]{ccc:c:ccc}
&  &  &  &  &  & t_{1}\\
& 0 &  & 0 &  & \reflectbox{$\ddots$} & \\
&  &  &  & t_{q} &  & \\
\hdashline & 0 &  & 0 &  & 0 & \\
\hdashline &  & t_{q} &  &  &  & \\
& \reflectbox{$\ddots$} &  & 0 &  & 0 & \\
t_{1} &  &  &  &  &  &
\end{array}
\right).
\]

Therefore we can identify $ \mathfrak{a}$ and $ \mathfrak{a}^*$, the dual of $\mathfrak{a}$,  with $ \mathbb{R}^q$.  Let $\alpha_i \in \mathfrak{a}^*$ 
be such that \begin{align*}
 \alpha_i\left(H_{(t_1, \dots , t_q)}\right)= t_i.
\end{align*}

Let $\Sigma=\Sigma(\mathfrak{g},\mathfrak{a}),$ be the set of restricted roots, so we have 
\begin{align*}
\Sigma= \left\{ \pm\alpha_i,   \pm 2\alpha_{i}, (1\leq i\leq q), \text{ and } \pm\left( \alpha_{i}\pm\alpha_{j}\right) , ( 1 \leq i< j \leq  q)\right\},
\end{align*}
  with multiplicities
\[
m_{\alpha_i}=2k,\;\; m_{2\alpha_i}= 1, \text{ and } m_{\alpha_i \pm \alpha_j}=2,
\]  where $k=p-q$. Let $\mathsf{C}^+$ be a  Weyl chamber in $\mathfrak{a}$ such that    
\begin{align*}
\mathsf{C}^+ = \left\{ H_{(t_1, \dots, t_q)} \in \mathfrak{a} \,\middle|\, t_1 > t_2 > \cdots > t_q > 0 \right\},
\end{align*} so   the corresponding system of
positive restricted roots $\Sigma^+$ consists of \begin{align*}
 \alpha_i,    2\alpha_{i}, (1\leq i\leq q), \text{ and } \left( \alpha_{i}\pm\alpha_{j}\right) , ( 1 \leq i< j \leq  q). 
\end{align*} 
Let \begin{align*}
\rho &= \frac{1}{2} \sum_{\alpha \in \Sigma^+} m_\alpha \alpha.
\end{align*} 
 Then \begin{align*}
\rho &=  \frac{1}{2}\left( \sum_{i=1}^q 2k\alpha_i + \sum_{i=1}^q 2\alpha_i + \sum_{i=1}^q 4 (q-i) \alpha_i    \right)\\&= \sum_{i=1}^q (k + 1 + 2 (q-i)) \alpha_i  .
\end{align*} 
By (\cite{Schlichtkrull},Proposition 7.1, Theorem 7.2; \cite{Ho-Olafsson}, Theorem 3.1), we have 

\begin{align*} 
\Lambda_l^+\left(U \right)=\Bigg\{\lambda= \sum_{i=1}^q \lambda_i\alpha_i \in \mathfrak{a}^*\Bigg|\;
\lambda_i - \lambda_j \in 2\mathbb{Z}^+(1\leq i< j \leq q),\; \lambda_1\in |l|+2\mathbb{Z}^+ \Bigg\}.
\end{align*}
Hence we have  
\begin{align}
\Lambda_l^+\left(U \right)=\left\lbrace\lambda= ( 2m_1+|l|, 2m_2+|l| , \dots , 2 m_q+|l|)\mid m_i\in \mathbb{Z} , m_1\geq m_2 \geq \dots \geq m_q\geq 0\right\}. \nonumber  \label{highest weight su(p,q)}
\end{align}
 An explicit formula for  the $\chi_l$-spherical functions on the complex Grassmannians $(SU(p,q)/S(U(p)\times U(q))$ was obtained in (\cite{Alhashami2}). More precisely, let $$P_n^{(\alpha, \beta)} (x) = \frac{(\alpha+1)_n}{n!} {}_2F_1\left( -n, n+\alpha+\beta+1, \alpha+1; \frac{1-x}{2}\right)$$ be  the  Jacobi polynomial of degree $n$,   where ${}_2F_1(\; \cdot\; ,\;\cdot\;,\; \cdot\; ;  \;\cdot\;  )$ is the Gauss hyper-geometric function, 
and let $$\tilde{P}_{n,l}\left(\cos(2x)\right)=\frac{P_n^{(k, |l|)} (\cos(2x))}{P_n^{(k,|l|)} (1)} = {}_2F_1\left(n+k+|l|+1,-n,k+1; \sin^2(x) \right).$$  Then we have    
\begin{theorem}[\cite{Alhashami2}]\label{Spherical function}
Let  $ \lambda\in \Lambda_l^+\left(U \right)$ such that  $$ \lambda= ( 2m_1+|l|, 2m_2+|l| , \dots , 2 m_q+|l|), m_i\in \mathbb{Z} , m_1\geq m_2 \geq \dots \geq m_q\geq 0.$$ Then the $\chi_l$-spherical function $\psi_\lambda$ on $U=SU(p+q)$
 is given by \begin{align*} 
\psi_{\lambda,l}\left(\exp \left(\sqrt{-1}H_{(t_1,\dots,t_q)}\right)\right)=  \frac{ \displaystyle C \det\bigg[\left( \tilde{P}_{n_i,l}(\cos(2t_j) \right) \bigg]_{i,\, j}\;\;\prod_{i=1}^q \cos^{|l|}(t_i) }{\displaystyle\prod_{1\leq i<j\leq q} (c(n_i)-c(n_j))\prod_{1\leq i<j\leq q}(\cos (2t_i)-\cos(2t_j))}  ,
\end{align*} where $H_{(t_1,\dots,t_q)}\in \mathfrak{a}$,  $k=p-q$, $c(n_i) = n_i(n_i+|l|+k+1)$  and 
\begin{align*}
 C=  2^{\frac{1 }{2} q(q-1)}\prod_{j=1}^{q-1}\left[(k+j)^{q-j}j!\right]   .\end{align*}
\end{theorem}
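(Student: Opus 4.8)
The plan is to characterize $\psi_{\lambda,l}$ as the unique normalized joint eigenfunction of the algebra of $U$-invariant differential operators acting on sections of the line bundle $\mathbb{L}_{\chi_l}$, and then to exhibit the right-hand side as precisely this eigenfunction. First I would use the covariance property $\psi_{\lambda,l}(k_1uk_2)=\chi_l(k_1k_2)^{-1}\psi_{\lambda,l}(u)$ together with the Cartan ($KAK$) decomposition of $U=SU(p+q)$ to reduce the computation to the values of $\psi_{\lambda,l}$ on $\exp(\sqrt{-1}\mathfrak{a})$, i.e.\ on the elements $\sqrt{-1}H_{(t_1,\dots,t_q)}$; every function invariant under the two-sided $\chi_l$-action is determined by this radial restriction, and the target is then a Weyl-covariant function of $(t_1,\dots,t_q)$.

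The key step is to compute the radial part of the Casimir operator $\Delta$ on $\mathbb{L}_{\chi_l}$ and to perform a gauge (similarity) transformation by a power of the Weyl denominator. Using the explicit restricted-root data of Section~\ref{Complex Grassmannians} — the roots $\pm\alpha_i,\pm2\alpha_i$ and $\pm(\alpha_i\pm\alpha_j)$ with multiplicities $m_{\alpha_i}=2k$, $m_{2\alpha_i}=1$ and $m_{\alpha_i\pm\alpha_j}=2$ — the radial Casimir is a trigonometric Calogero–Moser–Sutherland operator of type $BC_q$. The multiplicity $2$ on the roots $\alpha_i\pm\alpha_j$ is exactly what makes the Berezin–Karpelevich separation work: conjugating the radial operator by the Weyl denominator $\prod_{i<j}(\cos 2t_i-\cos 2t_j)$ and by the boundary factors coming from the remaining root multiplicities (including the factor $\prod_i\cos^{|l|}(t_i)$ which absorbs the character twist $\chi_l$) turns the operator into a sum of commuting one-variable Jacobi operators in the variables $x_j=\cos 2t_j$ with parameters governed by $k$ and $|l|$. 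The Casimir eigenvalue $\kappa_\lambda=\langle\lambda,\lambda+2\rho\rangle$ then separates additively as $\sum_i c(n_i)$ up to a constant, where $c(n_i)=n_i(n_i+|l|+k+1)$ is precisely the negative of the one-variable Jacobi eigenvalue on $\tilde P_{n_i,l}=P_{n_i}^{(k,|l|)}$ and $n_i=m_i+q-i$.

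Once the operator separates, the joint eigenfunctions with the correct antisymmetry (imposed by the gauge) are forced to be determinants of one-variable eigenfunctions: the antisymmetrization $\det\big[\tilde P_{n_i,l}(\cos 2t_j)\big]_{i,j}$ divided by the Vandermonde $\prod_{i<j}(\cos 2t_i-\cos 2t_j)$ is the unique symmetric polynomial-type solution with spectral data $(n_1,\dots,n_q)$, and multiplying back the gauge factor $\prod_i\cos^{|l|}(t_i)$ restores the $\chi_l$-covariant spherical function. This identifies $\psi_{\lambda,l}$ with the right-hand side up to the scalar $C$.

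The main obstacle, and the last step, is the normalization. The spherical function must satisfy $\psi_{\lambda,l}(e)=1$, i.e.\ the right-hand side must tend to $1$ as $T\to0$, where both $\det[\tilde P_{n_i,l}(\cos 2t_j)]$ and the Vandermonde $\prod_{i<j}(\cos 2t_i-\cos 2t_j)$ vanish to the same order. Evaluating this requires a confluent (L'Hôpital-type) limit: expanding each row of the determinant in powers of $(1-\cos 2t_j)$ and using the values and derivatives of $P_{n_i}^{(k,|l|)}$ at $x=1$, one reduces the quotient to a ratio of two Vandermonde-like determinants, one in the $n_i$ and one in the eigenvalue differences $c(n_i)-c(n_j)$, which produces the denominator factor $\prod_{i<j}(c(n_i)-c(n_j))$ and pins down $C=2^{\frac12 q(q-1)}\prod_{j=1}^{q-1}\big[(k+j)^{q-j}j!\big]$. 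Carrying out this limit carefully — matching the combinatorial constants produced by the derivatives of the Jacobi polynomials at $x=1$ against the advertised value of $C$ — is the delicate part; the rest is the now-standard separation-of-variables argument for $BC_q$ spherical functions.
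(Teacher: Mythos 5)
The paper does not actually prove this theorem: it is imported verbatim from the reference \cite{Alhashami2} (``Berezin--Karpelevich formula for $\chi$-spherical functions on complex Grassmannians''), so there is no internal proof to compare yours against. Judged on its own, your outline is the standard and essentially correct route to a Berezin--Karpelevich formula: reduction to the radial part via the $KAK$ decomposition, gauging the radial Casimir into a sum of commuting one-variable Jacobi operators (possible precisely because $m_{\alpha_i\pm\alpha_j}=2$), writing the joint eigenfunction as a determinant over a Vandermonde, and fixing the constant by a confluent limit at $T=0$. Your consistency checks are right: with $\alpha=k$, $\beta=|l|$ the one-variable eigenvalue is $n(n+k+|l|+1)=c(n)$, and $\kappa_\lambda$ does equal $\sum_i c(n_i)$ up to an additive constant independent of $\lambda$.

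Two points in your sketch carry the real mathematical weight and are asserted rather than established. First, the gauge step for the line bundle: the radial part of the Casimir acting on sections of $\mathbb{L}_{\chi_l}$ is not the scalar radial Casimir but acquires first- and zeroth-order terms from the $\mathfrak{z}(\mathfrak{k})$-component, and showing that conjugation by $\prod_i\cos^{|l|}(t_i)$ converts it into the $BC_q$ Jacobi operator with the $2\alpha_i$-parameter shifted by $|l|$ is exactly the content of the Shimeno/Ho--Olafsson reduction; this computation is the crux of the $\chi_l$-generalization and cannot be waved through. Second, uniqueness: a single Casimir eigenvalue $\sum_i c(n_i)$ does not determine the tuple $(n_1,\dots,n_q)$, so ``the unique symmetric polynomial-type solution with spectral data $(n_1,\dots,n_q)$'' needs justification --- either by invoking the full commuting family of invariant operators whose radial parts are the elementary symmetric functions of the separated Jacobi operators, or by an a priori argument that the radial restriction of $\psi_{\lambda,l}$ is a Weyl-invariant trigonometric polynomial with prescribed leading exponent $\lambda$, which then forces the determinant form. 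With those two steps supplied (and the normalization of $\tilde{P}_{n,l}$ pinned down, since the paper never defines it and the constant $C$ depends on that choice), your plan goes through.
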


 \noindent
 \noindent
Note that the $\chi_l$-spherical function $\psi_{\lambda,l}$ in 
Theorem~\ref{Spherical function} can be defined for all elements of $U$ 
by means of the decomposition 
\[
U \;=\; K\,\exp\!\bigl(i\,\mathfrak{a}\bigr)\,K
\quad (\text{see Theorem 7.39 in \cite{Lie groups beyond}}).
\]
In other words, since $\psi_{\lambda,l}$  is $\chi_l$-bi-invariant, for any $u \in U$, 
\[
u \;=\; k_1\,\exp\!\bigl(i\,X\bigr)\,k_2,
\quad \text{ for some }
k_1, \; k_2 \,\in\, K, \text{ and } X\in \mathfrak{a},
\]
we obtain
\[
\psi_{\lambda,l}(u)
\;=\;
\chi_l(k_1 k_2)^{-1}\,\psi_{\lambda,l}\bigl(\exp(i\,X)\bigr).
\]
\begin{corollary}
 \label{estimation}

Let  $ \lambda\in \Lambda_l^+\left(U \right)$ such that  $$ \lambda= ( 2m_1+|l|, 2m_2+|l| , \dots , 2 m_q+|l|), m_i\in \mathbb{Z} , m_1\geq m_2 \geq \dots \geq m_q\geq 0$$  and let $n_j=m_j+q-j$ and let
\[
u \;=\; k_1\,\exp\!\bigl(i\,H_{\left(t_{1},...,t_{q}\right)}\bigr)\,k_2,
\]
 where 
\(k_1, \,  k_2 \,\in\, K\),  and  \(H_{\left(t_{1},...,t_{q}\right)}\in \mathfrak{a}\).

It follows  for sufficiently large $n_j$ we have 
\[	
\left\vert \psi_{{\lambda,l}}\left(u
  \right)\right\vert= \left\vert \psi_{{\lambda,l}}\left(\exp\left(\sqrt{-1}H_{\left(t_{1},...,t_{q}\right)}
\right)  \right)\right\vert \leq \begin{cases}
   \frac{C\left(l, t_1, \dots, t_q \right) }{ {\prod\limits_{j=1}^{q}} n_j^{\frac{1}{2}(2p-q)}  }&\mbox{ if } m_q>0,  \\\\  \frac{C\left(l, t_1, \dots, t_q \right)}{{\prod\limits_{j=1}^{q-1}}{n_{j}}^{\frac{1}{2} (2p-q+3)}} &\mbox{ if }  m_q=0,
 \end{cases}
 \]
where $H_{(t_1,\dots,t_q)}\in \mathfrak{a}$ and  $C\left(l, t_1, \dots, t_q \right) $ is a constant depending on $t_1, \dots, t_q$ and $l$.
\end{corollary}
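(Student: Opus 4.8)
The plan is to start from the explicit formula for $\psi_{\lambda,l}$ in Theorem \ref{Spherical function} and to estimate each of its ingredients separately as the $n_j$ grow. Writing $x_j=\cos(2t_j)$, the modulus of the spherical function is
\[
\left\vert\psi_{\lambda,l}\right\vert=\frac{|C|\,\bigl|\det[\tilde P_{n_i,l}(x_j)]_{i,j}\bigr|\,\prod_{i=1}^q|\cos(t_i)|^{|l|}}{\bigl|\prod_{i<j}(c(n_i)-c(n_j))\bigr|\,\bigl|\prod_{i<j}(x_i-x_j)\bigr|},
\]
so that, since $t_1,\dots,t_q$ lie in the open chamber and are therefore distinct, the two $t$-dependent products $\prod_i|\cos(t_i)|^{|l|}$ and $\prod_{i<j}(x_i-x_j)$ are nonzero quantities that we absorb into the function $C(t_1,\dots,t_q)$. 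What remains is to bound the determinant in the numerator from above and the product $\prod_{i<j}(c(n_i)-c(n_j))$ from below.

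For the numerator I would invoke the classical Darboux asymptotics of the renormalized one--variable Jacobi polynomials $\tilde P_{n,l}$: uniformly for $t$ in compact subsets of the open chamber they satisfy a bound of the form $|\tilde P_{n,l}(\cos 2t)|\le C(t)\,n^{-(k+\frac12)}$, where $k=p-q$ (this is the rank--one decay rate attached to the chosen normalization, as is readily checked directly when $q=1$). Expanding $\det[\tilde P_{n_i,l}(x_j)]_{i,j}$ by the Leibniz formula, each of the $q!$ terms is a product of one entry from each row $i$, hence is $O\bigl(\prod_{j=1}^q n_j^{-(k+\frac12)}\bigr)$ with a constant depending only on the $t_j$; summing gives $\bigl|\det[\tilde P_{n_i,l}(x_j)]_{i,j}\bigr|\le C(t_1,\dots,t_q)\prod_{j=1}^q n_j^{-(k+\frac12)}$.

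For the denominator I would use the factorization $c(n_i)-c(n_j)=(n_i-n_j)(n_i+n_j+|l|+k+1)$. Since the $n_i=m_i+q-i$ are strictly decreasing integers, one has $n_i-n_j\ge 1$ and $n_i+n_j+|l|+k+1\ge n_i+n_j\ge \sqrt{n_i n_j}$ for $i<j$, so that $|c(n_i)-c(n_j)|\ge \sqrt{n_i n_j}$. Taking the product over $i<j$, in which each index occurs in exactly $q-1$ factors, yields $\bigl|\prod_{i<j}(c(n_i)-c(n_j))\bigr|\ge \prod_{j=1}^q n_j^{(q-1)/2}$. Dividing the numerator bound by this lower bound, the exponents combine as $-(k+\frac12)-\frac{q-1}{2}=-\frac{2p-q}{2}$ (recall $k=p-q$), which is precisely the assertion in the case $m_q>0$.

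In the boundary case $m_q=0$ one has $n_q=0$ fixed while $n_1,\dots,n_{q-1}$ are large. The row of the determinant indexed by $n_q=0$ now consists of the constants $\tilde P_{0,l}(x_j)$, so expanding along it shows $\bigl|\det[\tilde P_{n_i,l}(x_j)]_{i,j}\bigr|\le C(t_1,\dots,t_q)\prod_{j=1}^{q-1} n_j^{-(k+\frac12)}$. In the denominator the $q-1$ factors coming from the pairs $(i,q)$ equal $c(n_i)-c(0)=n_i(n_i+|l|+k+1)\ge n_i^2$, while the remaining pairs contribute $\ge \prod_{j=1}^{q-1} n_j^{(q-2)/2}$ exactly as before; together the denominator is $\ge\prod_{j=1}^{q-1} n_j^{(q+2)/2}$, and the exponents now combine to $-(k+\frac12)-\frac{q+2}{2}=-\frac{2p-q+3}{2}$, giving the second case. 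The main obstacle is the first step, namely pinning down the uniform Jacobi (Darboux/Hilb) asymptotics of $\tilde P_{n,l}$ with the correct exponent $k+\tfrac12$ and a controlled $t$-dependence on compact subsets of the open chamber; once this analytic input is in hand, the two estimates above reduce to routine power counting.
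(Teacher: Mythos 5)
Your proposal is correct and follows essentially the same route the paper takes, since the paper's one-line proof simply delegates to the Berezin--Karpelevich-type argument of \cite{Alhashami}: estimate the determinant entrywise via the Darboux/Szeg\H{o} decay $n^{-(k+\frac12)}$ of the normalized Jacobi polynomials, bound $\prod_{i<j}\left(c(n_i)-c(n_j)\right)$ from below using the factorization $(n_i-n_j)(n_i+n_j+|l|+k+1)$, and absorb the $t$-dependence into $C(t_1,\dots,t_q)$. Your power counting reproduces both exponents $\tfrac12(2p-q)$ and $\tfrac12(2p-q+3)$ exactly, so the only input left implicit --- the normalization of $\tilde P_{n,l}$, which the paper also leaves to \cite{Alhashami2} --- is the one you correctly identify and cross-check against the rank-one case.
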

\begin{proof}
By Theorem \ref{Spherical function} and a similar argument as in \cite{Alhashami}.
\end{proof}

\section{Proof of the Main Theorem} \label{Proof of the Main Theorem}

\begin{lemma}
Let $(\pi_\lambda , E_\lambda)$ be an irreducible $\chi_l$-spherical representation of \\ $U=SU(p+q)$ with  highest weight
$\lambda =(2m_1+|l|,\dots ,2m_q+|l|)$ and $d_\lambda=\dim(E_\lambda)$.
Then  
\begin{footnotesize}
\begin{align*}
d_\lambda &=\begin{cases} \displaystyle \prod_{i=1}^q  \frac{\varphi\Big(2m_i+|l|+k+1+2(q-i); \frac{1}{2}(k-1)\Big)}{\varphi\Big(k+1+2(q-i); \frac{1}{2}(k-1)\Big)}       \prod_{i=1}^q \frac{2m_i+|l|+k+1+2(q-i)}{ k+1+2(q-i)}  
\\\;\;\;\;\times \displaystyle \prod_{1\leq i<j\leq q } \left(\frac{ m_i+m_j+|l|+ k+2q-(i+j) }{k+2q-(i+j)}\right)^2
 \prod_{1\leq i<j\leq q } \left(\frac{ m_i-m_j+j-i}{j-i} \right)^2   \;\;  \text{ if } p\neq q,\\ \\\\\displaystyle \prod_{i=1}^q \frac{ 2m_i+|l|+ 1+2(q-i)}{ 1+2(q-i)}
\prod_{1\leq i<j\leq q } \left(\frac{m_i+m_j+|l|+ 1+2q-(i+j) }{1+2q-(i+j)}\right)^2
 \\  \;\;\;\;\times  \displaystyle   \prod_{1\leq i<j\leq q } \left(\frac{m_i-m_j+j-i}{j-i} \right)^2 \hspace{6.2 cm} \text{ if }p=q.
 \end{cases}
\end{align*}  
\end{footnotesize}

\end{lemma}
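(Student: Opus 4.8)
The plan is to read off $d_\lambda=\dim_{\mathbb{C}}E_\lambda$ from the Weyl dimension formula for $U=SU(p+q)$, whose root system is of type $A_{p+q-1}$, and then to reorganize the resulting product according to the block structure coming from $K=S(U(p)\times U(q))$. Writing the positive roots as $e_i-e_j$ for $1\le i<j\le p+q$, with half sum $\rho$ satisfying $\langle\rho,e_i-e_j\rangle=j-i$, the formula reads
\[
d_\lambda=\prod_{1\le i<j\le p+q}\frac{\langle\Lambda+\rho,\,e_i-e_j\rangle}{\langle\rho,\,e_i-e_j\rangle},
\]
where $\Lambda$ is the highest weight $\lambda$ of $\pi_\lambda$ expressed in the standard coordinates of $SU(p+q)$. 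The first task is therefore to translate the restricted-root datum $\lambda=\sum_{i=1}^q(2m_i+|l|)\alpha_i\in\Lambda_l^+(U)$ into these coordinates.

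First I would use the description of $\Lambda_l^+(U)$ together with the Hermitian symmetric structure of $U/K$ to see that $\Lambda$ has the block ``bi-staircase'' shape forced by the $q+(p-q)+q$ decomposition of the index set: its first $q$ entries are $m_i+|l|$, its middle $k=p-q$ entries are constant, and its last $q$ entries are $-m_q,\dots,-m_1$, so that $\Lambda_i-\Lambda_{p+q+1-i}=2m_i+|l|$ and $\Lambda_i-\Lambda_j=m_i-m_j$ within the first block. With $\Lambda$ in hand I would split the product over positive roots into pairs $(i,j)$ according to which of the three blocks the indices $i$ and $j$ fall into. The pairs with both indices in the first block, together with those with both indices in the last block, produce two identical copies of $\prod_{i<j}\frac{m_i-m_j+j-i}{j-i}$, giving the squared factor $\prod_{i<j}\big(\frac{m_i-m_j+j-i}{j-i}\big)^2$; the ``diagonal'' pairs $(i,\,p+q+1-i)$ for $1\le i\le q$ give exactly $\prod_{i=1}^q\frac{2m_i+|l|+k+1+2(q-i)}{k+1+2(q-i)}$, since there $\langle\rho,e_i-e_{p+q+1-i}\rangle=p+q+1-2i=k+1+2(q-i)$; the remaining first-block/last-block pairs give the squared factor $\prod_{i<j}\big(\frac{m_i+m_j+|l|+k+2q-(i+j)}{k+2q-(i+j)}\big)^2$; and the pairs coupling a first- or last-block index with the middle block assemble into the ratio of $\varphi$-factors, while the purely middle-block pairs contribute $1$.

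The key computation is this last assembling step: for each fixed $i$ the $k$ first-block--middle pairs and the $k$ middle--last-block pairs form two arithmetic progressions that are symmetric about the common centre $m_i+\tfrac{|l|}{2}+(q-i)+\tfrac{k+1}{2}$, that is, about half the argument $2m_i+|l|+k+1+2(q-i)$ of $\varphi$; recognizing this symmetry is precisely what lets the $2k$ linear factors collapse into $\varphi\big(2m_i+|l|+k+1+2(q-i);\tfrac12(k-1)\big)$, with the denominator $\varphi\big(k+1+2(q-i);\tfrac12(k-1)\big)$ coming from the same pairs evaluated at the trivial weight $m_i=|l|=0$ that produces the $\langle\rho,\cdot\rangle$ factors. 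Finally, setting $k=0$ in the case $p=q$ removes the middle block altogether, so the $\varphi$-quotient disappears and the denominators $k+1+2(q-i)$ and $k+2q-(i+j)$ reduce to $1+2(q-i)$ and $1+2q-(i+j)$, yielding the second formula. I expect the main obstacle to be pinning down $\Lambda$ exactly --- in particular the correct normalization of the central ($|l|$) twist and the value on the middle block, together with the integrality and parity constraints when $|l|$ is odd --- since every factor downstream is forced by it; the subsequent partition of the Weyl product is routine but bookkeeping-heavy, the middle-block collapse into $\varphi$ being the one genuinely delicate point.
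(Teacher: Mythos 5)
Your route---the classical Weyl dimension formula for the $A_{p+q-1}$ root system of $SU(p+q)$, with the positive roots partitioned according to the three blocks of sizes $q$, $k=p-q$, $q$---is genuinely different from the paper's, which invokes the restricted-root form of the dimension formula of Gindikin--Goodman and computes a product over the restricted root system of type $BC_q$ with multiplicities $(2k,1,2)$. Your identification of the highest weight $\Lambda=(m_1+|l|,\dots,m_q+|l|,0,\dots,0,-m_q,\dots,-m_1)$, and your treatment of the within-block pairs and of the diagonal pairs $(i,p+q+1-i)$, are correct. But the two steps you yourself flag as ``bookkeeping'' and ``delicate'' both fail, and they fail in a way that cannot be repaired so as to yield the displayed formula.

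First, for the off-diagonal first-block/last-block pairs your own normalization $\langle\rho,e_i-e_j\rangle=j-i$ forces $\langle\Lambda+\rho,\,e_i-e_{p+q+1-j}\rangle=m_i+m_j+|l|+k+2q+1-(i+j)$, not $m_i+m_j+|l|+k+2q-(i+j)$; the factor you assert is off by one in both numerator and denominator. Second, and more seriously, the claimed ``collapse'' of the $2k$ middle-block factors into $\varphi\big(2m_i+|l|+k+1+2(q-i);\tfrac12(k-1)\big)$ is impossible: for fixed $i$ those pairs contribute $\prod_{s=q+1-i}^{p-i}(m_i+|l|+s)(m_i+s)$, a polynomial of degree $2k$ in $m_i$, whereas $\varphi\big(2m_i+|l|+k+1+2(q-i);\tfrac12(k-1)\big)$ is a product of only $k$ linear factors; the symmetry about half the argument of $\varphi$ that you correctly observe does not convert factors centred near $m_i$ into factors centred near $2m_i$. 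Indeed, completing your own computation honestly refutes the statement rather than proving it: for $p=2$, $q=1$, $l=0$ the spherical representation with parameter $m_1=m$ is $V_{(m,0,-m)}$, of dimension $(m+1)^3$, while the right-hand side of the Lemma evaluates to $(m+1)^2$. (The paper's derivation has matching defects---a squared $\Phi$ becomes an unsquared $\varphi$-quotient, and $k+1+2q-(i+j)$ silently becomes $k+2q-(i+j)$ between two consecutive lines---so the discrepancy originates in the source; but a proof attempt must either derive the stated formula or identify it as incorrect, and yours does neither at these two points.)
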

\begin{proof}
Let
\begin{align*}
\Phi(x,y;m)= \frac{\varphi(x+y;\, m)}{\varphi(y;\, m)},
\end{align*}
with 
\begin{align*}
\varphi(x;t)= (x - t)(x - t + 1)\cdots(x + t)
\end{align*} 
Let
\begin{align*}
W(x,y,m,1)= \Phi(x,y;0)\left[\Phi\left(x,y;\frac{1}{4}m-\frac{1}{2}  \right) \right]^2,
\end{align*}

\begin{align*}
W(x,y,m,0)= \begin{cases} 
\Phi(x,y;0), \;\;\;\;\;\;\; \text{ if }m=1,\\
\Phi(x,y;0)^2\;\;\;\;\;\;\; \text{ if } m=2.
\end{cases}
\end{align*}

Then by  (\cite{Goodman}, Theorem 2.1), we have
\begin{align*}
d_\lambda &= \prod_{\alpha\in \Sigma^+_0} W\Big( \langle \lambda, \alpha\rangle, \langle \rho ,\alpha\rangle ; m_{\alpha},m_{2\alpha}\Big),
\end{align*} 
where $\Sigma_0^+$ is the set of the indivisible positive roots.
For   $\lambda=( \lambda_1,\dots , \lambda_q)$,   \\$ \mu=( \mu_1,\dots , \mu_q) \in \mathfrak{a}^*$, and $n=p+q$, we have
\begin{align*}
\langle \lambda, \mu \rangle  = 4n \sum_{i=1}^q \lambda_i\mu_i. 
\end{align*} 

Therefore for $p\neq q$ we have 
\begin{footnotesize}
\begin{align*}
d_\lambda &= \prod_{\alpha\in \Sigma^+_0} W\Big( \langle \lambda, \alpha\rangle, \langle \rho ,\alpha\rangle ; m_{\alpha},m_{2\alpha}\Big)
\\&=  \prod_{i=1}^q W\Big({4n}(2m_i+|l|), {4n}[k+1+2(q-i) ];2k,1\Big) 
 \\ &\;\;\;\;\;\;\;\times   \prod_{1\leq i<j\leq q } W\Big( {8n}(m_i+m_j+|l|), {8n}(k+1+2q-(i+j)) ; 2,0\Big)  \prod_{1\leq i<j\leq q } W\Big( {8n}(m_i-m_j), {8n}(j-i) ; 2,0\Big) 
\\ &=  \prod_{i=1}^q \Phi\bigg( {4n}(2m_i+|l|), {4n}[k+1+2(q-i)];\frac{1}{2}\left( k-1\right)\bigg) ^2 \prod_{i=1}^q \Phi\Big( {4n}(2m_i+|l|), {4n}[k+1+2(q-i)]\Big)
\\&\;\;\;\;\times\prod_{1\leq i<j\leq q } \Phi\Big( {8n}(m_i+m_j+|l|), {8n}(k+2q-(i+j)) \Big)^2
 \prod_{1\leq i<j\leq q } \Phi\Big( {8n}(m_i-m_j),{8n}(j-i) \Big)^2
 \\&= \prod_{i=1}^q  \frac{\varphi\Big(2m_i+|l|+k+1+2(q-i); \frac{1}{2}(k-1)\Big)}{\varphi\Big(k+1+2(q-i); \frac{1}{2}(k-1)\Big)}       \prod_{i=1}^q \frac{2m_i+|l|+k+1+2(q-i)}{ k+1+2(q-i)}  
\\&\;\;\;\;\times\prod_{1\leq i<j\leq q } \left(\frac{ m_i+m_j+|l|+ k+2q-(i+j) }{k+2q-(i+j)}\right)^2
 \prod_{1\leq i<j\leq q } \left(\frac{ m_i-m_j+j-i}{j-i} \right)^2   .
\end{align*}
\end{footnotesize}
A similar argument can be applied for the case $p=q$.
\end{proof}
 
\begin{corollary} \label{dimention 1}Let 
\[
\lambda 
= \bigl(2m_1 + |l|,\; 2m_2 + |l|,\; \dots,\; 2m_q + |l|\bigr) 
\;\in\; \Lambda_l^+ (U).
\]
Then  
\[
d_\lambda \;\le\; \bigl(2m_1 + |l| + 1\bigr)^{q(2p-1)}\;\le\; C\left(l,p,q\right)\, n_1^{\,q(2p-1)},
\]
where \(n_1 = m_1 + q - 1\) and \(C\left(l,p,q\right)\) is a positive constant depending on \(l\), \(p\), and \(q\).

\end{corollary}
 
\begin{proof}
Suppose that \(p > q\). Then
 \begin{align*}
\frac{\varphi\Bigl(2m_i + |l| + k + 1 + 2(q-i);\tfrac{1}{2}(k-1)\Bigr)}
     {\varphi\Bigl(k + 1 + 2(q-i);\tfrac{1}{2}(k-1)\Bigr)}
&= 
\prod_{j=0}^{k-1}
\frac{2m_i+|l|+k+1+2(q-i)-\tfrac{1}{2}(k-1)+j}
     {k+1+2(q-i)-\tfrac{1}{2}(k-1)+j}\\[1ex]
&=
\prod_{j=0}^{k-1}
\left(\frac{2m_i+|l|}{\,k+1+2(q-i)-\tfrac{1}{2}(k-1)+j}+1\right)\\[1ex] 
&\le (2 m_1+|l|+1)^k,
\end{align*}
where $k+1+2(q-i)-\frac{1}{2}(k-1)+j \geq k+1+2(q-i)-\frac{1}{2}(k-1) > 1  $.

It follows that
\[
\begin{aligned}
&\prod_{i=1}^q \frac{\varphi\Bigl(2m_i + |l| + k + 1 + 2(q-i);\tfrac{1}{2}(k-1)\Bigr)}
                  {\varphi\Bigl(k + 1 + 2(q-i);\tfrac{1}{2}(k-1)\Bigr)}
\prod_{i=1}^q \frac{2m_i+|l|+k+1+2(q-i)}{k+1+2(q-i)}\\[1ex]
&\quad \times \prod_{1\leq i<j\leq q } \Biggl(\frac{m_i+m_j+|l|+k+2q-(i+j)}
      {k+2q-(i+j)}\Biggr)^2
\prod_{1\leq i<j\leq q } \Biggl(\frac{m_i-m_j+j-i}{j-i}\Biggr)^2\\[1ex]
&\le (2m_1+|l|+1)^{\,qk + q + q(q-1) + q(q-1)}\\[1ex]
&= (2m_1+|l|+1)^{\,q(p-q) + q + 2q(q-1)}\\[1ex]
&\le (2m_1+|l|+1)^{\,q(2p-1)}.
\end{aligned}
\]

A similar argument applies when \(p=q\).  
\end{proof}

 \begin{lemma} \label{Casimir operator estimate}
  Let \(\lambda = (2m_1 + |l|, \dots, 2m_q + |l|)\in \Lambda^+_l(U)\), with $\|\lambda\| \geq \|\rho\| $ then 
\begin{equation*}
\kappa_{\lambda}
    \le\; C\left(l\right) \,n_1^2.
\end{equation*} for some  positive constant \(C\left(l\right) \) depending on   $l$.
 \end{lemma}
 \begin{proof} Let \(\lambda = (2m_1 + |l|, \dots, 2m_q + |l|)\in \Lambda^+_l(U)\), by the Cauchy-Schwarz inequality, for \(\lambda = (2m_1 + |l|, \dots, 2m_q + |l|)\in \Lambda^+_l(U)\),
\begin{align*}
\kappa_{\lambda}
   \; &=\; \langle \lambda + 2\rho, \lambda\rangle 
   \\ &\le\; \|\lambda + 2\rho\|\;\|\lambda\|
    \\ &\le\; \|\lambda\|^{2} + 2\,\|\rho\|\;\|\lambda\| 
    \\ &\le\; 3 \|\lambda\|^{2}.
\end{align*}
Since all norms on a finite-dimensional vector space are equivalent, 
\begin{align*}
  \kappa_\lambda  \le\; 3 \|\lambda\|^{2}
   \;\le\; C_{1}\,\bigl(2m_1 + |l|\bigr)^2
   \;\le\; C_{1}\,\bigl(2n_1 + |l|\bigr)^2
   \;\le\; C\left(l\right)\,n_1^2,   
\end{align*}
for some positive constants  \(C_1 \) and  \(C\left(l\right)  \).
 \end{proof}
\begin{proposition}\label{prop2}
With the above notation, we have
\begin{align*}
\left\Vert f_{a_{1}, \dots, a_{r},\chi_l}\right\Vert _{H^{s}\left(U\right)}^{2}
&\leq    
              \sum_{\substack{\lambda\in \Lambda_l^+(U)\\\|\lambda\| < \|\rho\|}} 
   d_\lambda \,\bigl(1+\kappa_\lambda\bigr)^{s}\,\prod_{i=1}^{r}
   \bigl\vert \psi_{\lambda,l}(a_i)\bigr\vert^{2}  \\ &\hspace{0.5cm}+ C(l,a_1, \dots, a_r,p,q) \,\Biggl[\,
 \sum_{n =1}^{\infty}\,\frac{1}{\,n ^{\,r(2p-q) - 2s - q(2p-1)}} 
+ \sum_{n =1}^{\infty}\,\frac{1}{\,n ^{\,r(2p-q+3) - 2s - q(2p-1)}} 
\Biggr],
\end{align*}
where \(C(l,a_1, \dots, a_r,p,q)\) is a positive constant depending on $l$, \(a_1, \dots, a_r\), $p$ and  $q$. 
\end{proposition}

\begin{proof}

Using   Corollary~\ref{dimention 1},  Corollary~\ref{estimation}, Lemma \ref{Casimir operator estimate}   and Proposition \ref{Sobloev norm estimation}, we get
{  
 
\begin{footnotesize}
\begin{align*}
 \hspace{-5cm}\bigl\Vert & f_{a_{1}, \dots, a_{r},\chi_l}\bigr\Vert_{H^{s}(U)}^{2}\;\;\;\;\;\;  - \sum_{\substack{\lambda\in \Lambda_l^+(U)\\\|\lambda\| <\|\rho\|}} 
   d_\lambda \,\bigl(1+\kappa_\lambda\bigr)^{s}\,\prod_{i=1}^{r}
   \bigl\vert \psi_{\lambda,l}(a _i)\bigr\vert^{2}  
 \\ &=\; \sum_{\lambda\in \Lambda_l^+(U)}  
   d_\lambda \,\bigl(1+\kappa_\lambda\bigr)^{s}\,\prod_{i=1}^{r}
   \bigl\vert \psi_{\lambda,l}(a_i)\bigr\vert^{2}- \sum_{\substack{\lambda\in \Lambda_l^+(U)\\\|\lambda\| <\|\rho\|}} 
   d_\lambda \,\bigl(1+\kappa_\lambda\bigr)^{s}\,\prod_{i=1}^{r}
   \bigl\vert \psi_{\lambda,l}(a _i)\bigr\vert^{2}   
             \\& = \sum_{\substack{\lambda\in \Lambda_l^+(U)\\\|\lambda\| \geq \|\rho\|}}
   d_\lambda \,\bigl(1+\kappa_\lambda\bigr)^{s}\,\prod_{i=1}^{r}
   \bigl\vert \psi_{\lambda,l}(a _i)\bigr\vert^{2}
    \\&\le\;      C_1(l,a_1, \dots, a_r,p,q)\,\Biggl[\,\sum_{n_1>n_2>\dots>n_q\geq 1} \frac{n_1^{\,2s + q(2p-1)}}{\prod_{j=1}^q n_j^{\,r(2p-q)}} 
\;+\; \sum_{n_1>n_2>\dots>n_{q-1}\geq 1} \frac{n_1^{\,2s + q(2p-1)}}{\prod_{j=2}^{q-1} n_j^{\,r(2p-q+3)}} 
\Biggr]
\\ &\le\;  C_1(l,a_1, \dots, a_r,p,q)\,\Biggl[\,
\sum_{n_1>n_2>\dots>n_q\geq 1} \frac{1}{\,n_1^{\,r(2p-q)-2s - q(2p-1)}\,\prod_{j=2}^{q }n_j^{\,r(2p-q)}} 
\\
&\qquad\qquad\qquad\qquad
+\;\sum_{n_1>n_2>\dots>n_{q-1}\geq 1} \frac{1}{\,n_1^{\,r(2p-q+3)-2s - q(2p-1)} \,\prod_{j=2}^{q-1} n_j^{\,r(2p-q+3)}} 
\Biggr]
\\
&\le\;  C_1(l,a_1, \dots, a_r,p,q)\,\Biggl[\,
 \Biggl(\sum_{n_1=1}^{\infty}\,\frac{1}{\,n_1^{\,r(2p-q) - 2s - q(2p-1)}} 
 \Biggl) \Biggl(\sum_{n_2=1}^{\infty} \frac{1}{n_2^{\,r(2p-q)}}\Biggr)\cdots \Biggl(\sum_{n_q=1}^{\infty} \frac{1}{n_q^{\,r(2p-q)}}\Biggr)
\\
&\qquad\qquad\qquad\qquad\qquad\qquad 
+ \Biggl(\sum_{n_1=1}^{\infty}\,\frac{1}{\,n_1^{\,r(2p-q+3) - 2s - q(2p-1)}} \Biggl)
  \Biggl(\sum_{n_2=1}^{\infty} \frac{1}{n_2^{\,r(2p-q)}}\Biggr) \cdots \Biggl(\sum_{n_{q-1}=1}^{\infty} \frac{1}{n_{q-1}^{\,r(2p-q)}}\Biggr)
\Biggr],   
\\ &\leq C_1(l,a_1, \dots, a_r,p,q)\,  \Biggl[\sum_{k=1}^{\infty} \frac{1}{k^{\,r(2p-q)}}\Biggr]^{q-2}\Biggl[\,
 \sum_{n =1}^{\infty}\,\frac{1}{\,n ^{\,r(2p-q) - 2s - q(2p-1)}} 
\\
&\qquad\qquad\qquad\qquad\qquad\qquad\qquad\qquad\qquad\qquad\qquad\qquad\qquad
+ \sum_{n=1}^{\infty}\,\frac{1}{\,n ^{\,r(2p-q+3) - 2s - q(2p-1)}} \Biggr]
,               
\end{align*}
\end{footnotesize}}
   for some positive constant
\(C_1(l,a_1, \dots, a_r,p,q)\).
Since \(2p - q \ge p \ge 2\), we have \(r(2p-q) \ge 2\). Thus  \(\sum_{k=1}^{\infty} k^{-\,r(2p-q)}\)   converges. Hence the proposition.
\end{proof}

\begin{corollary}\label{cor-fund}
If  
\[
   r \;>\; \frac{1 + 2s + q\,(2p-1)}{\,2p - q\,},
\]
then the function \(f_{a_{1}, \dots, a_{r}, \chi_l}\) is in \(H^{s}(U)\).  In other words,
\[
   \bigl\Vert f_{a_{1}, \dots, a_{r}, \chi_l}\bigr\Vert_{H^{s}(U)} \;<\;\infty.
\]
\end{corollary}

\begin{proof}
Observe that
 \begin{align*}
 \sum_{\substack{\lambda\in \Lambda_l^+(U)\\\|\lambda\| < \|\rho\|}} 
   d_\lambda \,\bigl(1+\kappa_\lambda\bigr)^{s}\,\prod_{i=1}^{r}
   \bigl\vert \psi_{\lambda,l}(a _i)\bigr\vert^{2}
 \end{align*}  is a finite sum. Also 
\[
   \sum_{\,n =1 } 
   \frac{1}{n ^{r(2p-q) - 2s - q(2p-1)} }
\]
converges whenever
\[
   r \;>\; \frac{1 + 2s + q(2p-1)}{2p - q}.
\]
Moreover 
\begin{align*}
\sum\limits_{n=1} ^\infty \frac{1}{   n ^{r \left(2p-q+3\right)-2s -q(2p-1)}   }  
\end{align*}
  converges if
\[
   r \;>\; \frac{1 + 2s + q(2p-1)}{\,2p - q + 3\,}.
\]
Hence by Proposition \ref{prop2} the result follows.
\end{proof}

Let \(\nu\) be a positive integer and define
\begin{equation}\label{c( p,q,nu)} \nonumber
   C\!\bigl(p,q,\nu\bigr) 
   \;:=\; \max\!\Bigl\{
     \,\left\lfloor \tfrac{(p+q)^{2} + 2\nu + q(2p-1)}{\,2p - q\,}\right\rfloor + 1,\; 2pq 
   \Bigr\}.
\end{equation}

\begin{theorem}\label{main}
Let \(\nu\) be a positive integer. If 
\[
   r \;\ge\; C\!\bigl(p,q,\nu\bigr),
\]
then 
\[
   \frac{d\bigl(\mu_{a_1,\chi_l} \ast \cdots \ast \mu_{a_r,\chi_l}\bigr)}{\,d\mu_U}
   \;\;\in\; \C^{\,\nu}\bigl(SU(p+q)\bigr).
\]
\end{theorem}

\begin{proof}
Observe first that
\[
   \dim\Bigl(SU(n)/S\bigl(U(p)\times U(q)\bigr)\Bigr) \;=\; 2pq.
\] Thus by Theorem \ref{Rgozin Theorem}, $ \mu _{a_1, \dots , a_r, \, \chi_l}$ is absolutely continuous with respect to the Haar measure $\mu_U$ of  $U$,  i.e., $ f_{a_1,\dots ,a_r,\chi_l} d\,\mu_U=d\,\mu_{ a_1,\dots ,a_r, \, \chi_l}.$

Let \(r \ge C(p,q,\nu)\) be a positive integer. By definition of \(C(p,q,\nu)\), we can choose  \(\epsilon>0\) such that
\[
   r 
   \;>\; \frac{(p+q)^{2} + 2\nu + q(2p - 1) + 2\epsilon}{\,2p - q\,}.
\]
Then, by Corollary~\ref{cor-fund} (taking
\(\displaystyle s = \nu \;+\; \frac{(p+q)^2 - 1}{2} \;+\;\epsilon\)),
we have
\[
   f_{a_{1}, \dots, a_{r}, \chi_l} 
   \;\in\; H^s\bigl(SU(p+q)\bigr).
\]
 Therefore,  the Sobolev Embedding Theorem implies
\[
   H^s\bigl(SU(p+q)\bigr)
   \;\subseteq\; \C^{\nu}\bigl(SU(p+q) \bigr).
\]
Hence the result follows.
\end{proof}
 {

\end{document}